\documentclass[11pt]{amsart}
\usepackage{etex}

\usepackage{amsmath,amssymb,amsthm,mathtools}
\usepackage[margin=1.12in]{geometry}
\usepackage[colorlinks=true,linkcolor=blue,citecolor=blue,urlcolor=blue]{hyperref}
\newtheorem{theorem}{Theorem}[section]
\newtheorem{proposition}[theorem]{Proposition}
\newtheorem{lemma}[theorem]{Lemma}
\newtheorem{corollary}[theorem]{Corollary}
\theoremstyle{definition}
\newtheorem{definition}[theorem]{Definition}
\theoremstyle{remark}
\newtheorem{remark}[theorem]{Remark}

\newcommand{\C}{\mathbb{C}}
\newcommand{\Z}{\mathbb{Z}}
\newcommand{\cO}{\mathcal{O}}
\newcommand{\cH}{\mathcal{H}}
\newcommand{\cS}{\mathcal{S}}
\newcommand{\cA}{\mathcal{A}}
\newcommand{\cF}{\mathcal{F}}
\newcommand{\GL}{\mathrm{GL}}
\newcommand{\Tr}{\operatorname{Tr}}
\newcommand{\diag}{\operatorname{diag}}
\newcommand{\vp}{\varpi}
\newcommand{\tw}{\mathrm{tw}}
\newcommand{\HG}{\mathcal{H}_{G'}}
\newcommand{\HS}{\mathcal{H}_{S}}
\newcommand{\HOff}{\mathcal{H}^{\mathrm{Off}}}

\title[A local proof of Offen's spherical transfer theorem]
{A Local Fourier-Transform Proof of\\
Offen's Spherical Transfer Theorem}
\author{Zhengyu Mao}
\address{Department of Mathematics and Computer Science, Rutgers University, Newark, NJ 07102, USA}
\email{zmao@rutgers.edu}
\author{Chen Wan}
\address{Department of Mathematics and Computer Science, Rutgers University, Newark, NJ 07102, USA}
\email{chen.wan@rutgers.edu}
\date{}

\subjclass[2020]{Primary 11F70; Secondary 22E50, 11F72, 33D52}
\keywords{relative trace formula, fundamental lemma, Kloosterman integrals,
	Hermitian forms, spherical functions, base change, Hall--Littlewood polynomials}
\begin{document}

\begin{abstract}
Let $E/F$ be an unramified quadratic extension of non-archimedean local fields of characteristic zero and odd residual characteristic.  Offen proved that, on the full spherical space of nondegenerate Hermitian matrices, equality of the Hironaka and Satake transforms implies Jacquet--Ye matching; see \cite[Theorem~10.1]{O09} and the formulation in \cite[Theorem~8.1(3)]{FLO12}.  We give a purely local proof of this theorem.  The argument combines Jacquet's compatibility of matching with Fourier transform, the Godement--Jacquet functional equation, and Hironaka's scalar functional equation for Hermitian zeta integrals.  We also make explicit the normalization dictionary between the sign convention used in Offen's rank-two paper \cite{O06} and that used in his all-rank theorem \cite{O09}.  The local proof recovers Jacquet's spherical-Hecke fundamental lemma from the unit case.
\end{abstract}

\maketitle

\section{Introduction}

\subsection{Background and the known theorem}
Let $E/F$ be an unramified quadratic extension of non-archimedean local fields of characteristic zero and odd residual characteristic.  Jacquet's relative trace formula compares Kloosterman orbital integrals on $G'=\GL_n(F)$
with Kloosterman orbital integrals on the space $S$ of invertible Hermitian matrices.  The fundamental lemma for the unit element was proved by Jacquet \cite{J1} (and by Ng\^o \cite{Ngo} in positive characteristic), and the fundamental lemma for the spherical Hecke algebra was proved by Jacquet \cite{J2}.  In the latter theorem the Hecke algebra $\cH_G$ of $G=\GL_n(E)$ acts on the two sides through base change
\[
b:\cH_G\longrightarrow\HG
\]
on the $\GL_n(F)$ side and through convolution on the Hermitian side.

The full spherical space
\[
\HS=C_c^\infty(S)^K,\qquad K=\GL_n(\cO_E),
\]
is much larger: Hironaka showed that it is a free $\cH_G$-module of rank $2^n$ \cite{H99}.  Jacquet's spherical-Hecke theorem (proved by a global argument) concerns the cyclic submodule generated by $\mathbf 1_{K\cap S}$.  Using Hironaka's spherical transform and the Satake isomorphism, one can establish a canonical bijection between $\HS$ and $\HG$; Offen \cite{O06} formulated the stronger assertion that arbitrary spherical functions under this bijection should match, and proved it directly for $n=2$. Later Offen proved the result for general $n$ in \cite[Theorem~10.1]{O09} by a global argument based on a simple relative trace formula.  Feigon--Lapid--Offen later recorded the result in exactly the transform-theoretic form used here: equality of the Hironaka and Satake transforms implies matching \cite[Theorem~8.1(3)]{FLO12}.  Beuzart-Plessis \cite{BP22} subsequently developed the local Plancherel theory of the Hermitian space and the associated relative-character formalism.  

The purpose of this note is to provide an entirely local proof of Offen's theorem. We believe this local approach is of its independent interest, and will in future work develop it to further generality.

\subsection{The Theorem}

Hironaka's transform \cite{H99} is naturally written in a spectral variable $z=(z_1,\dots,z_n)$.  To compare it with the Satake transform one chooses
\[
x_i=\epsilon q^{-z_i},\qquad \epsilon\in\{\pm1\}.
\]
We write
\[
\cH_\epsilon:\HS\longrightarrow
\Lambda:=\C[x_1^{\pm1},\dots,x_n^{\pm1}]^{S_n}
\]
for Hironaka's transform (which is a bijection).  Both $\epsilon$ normalizations are $\cH_G$-linear because base change acts through the squares $x_i^2$.  The two choices differ by the determinant twist $\omega\circ\det$; see Lemma~\ref{lem:change}.

For $\delta\in\{0,1\}$ set
\[
\gamma_\delta(a)=\prod_{k=1}^n\omega(a_k)^{\delta+k},
\qquad a=\diag(a_1,\dots,a_n)\in \GL_n(F),
\]
where $\omega$ is the quadratic character of $F^\times$ attached to $E/F$.

We use $\Omega'[\Phi,\psi;a]$ and $\Omega[\Psi,\psi;a]$ to denote the Kloosterman orbital integrals on $G'$ and on $S$ respectively. Here $a=\diag(a_1,\dots,a_n)\in \GL_n(F)$ and $\psi$ is an unramified additive character. Let $\cS$ be the Satake transform on the Hecke algebra $G'$ (which is an isomorphism to $\Lambda$).

\begin{theorem}[Offen's spherical transfer theorem]\label{thm:offen}
Let $n\ge1$ and $\epsilon\in\{\pm1\}$, and put
\[
\delta_\epsilon=\frac{1-\epsilon}{2}.
\]
If $\Phi\in\HG$ and $\Psi\in\HS$ satisfy
\[
\cS\Phi=\cH_\epsilon\Psi,
\]
then, for every invertible diagonal matrix $a$,
\[
\Omega'[\Phi,\psi;a]
 =\gamma_{\delta_\epsilon}(a)\,\Omega[\Psi,\psi;a]
 =\epsilon^{v(\det a)}\prod_{k=1}^n\omega(a_k)^k\,\Omega[\Psi,\psi;a].
\]
\end{theorem}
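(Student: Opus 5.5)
We propose to prove Theorem~\ref{thm:offen} by reducing to the unit fundamental lemma through a Fourier-transform argument, carried out entirely locally.

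\emph{Step 1: reduction to one sign.} By Lemma~\ref{lem:change}, passing from $\epsilon$ to $-\epsilon$ replaces $\Psi$ by its twist by $\omega\circ\det$ with $\cH_\epsilon\Psi$ fixed. On the diagonal this multiplies $\Omega[\Psi,\psi;a]$ by $\omega(\det a)=(-1)^{v(\det a)}$, which exactly converts $\gamma_0$ into $\gamma_1$. So it suffices to treat one value of $\epsilon$, and we will choose whichever matches Jacquet's normalization of the unit fundamental lemma \cite{J1}.

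\emph{Step 2: Fourier transform and matching.} Let $\Phi\mapsto\hat\Phi$ be the Fourier transform on $M_n(F)$ and $\Psi\mapsto\hat\Psi$ the Fourier transform on Hermitian matrices. Jacquet's compatibility says that if $\Phi\otimes|\det|^{s}$ matches $\Psi\otimes|\det|^{s}$ (with the $\gamma$ factor), then the Fourier transforms match after inserting $|\det|^{-n}$ and the Weil constant. Both sides are $K$-invariant. The essential point is that the unit functions $\mathbf 1_{M_n(\cO_F)}$ and $\mathbf 1_{\mathrm{Herm}_n(\cO_E)}$ are self-dual. Restricting them to $|\det|=q^{-m}$ produces, for each $m$, a pair of generating families whose orbital integrals match by \cite{J1}.

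\emph{Step 3: transform side.} We compute what the Fourier transform does on spectral transforms. On $G'$, the Godement--Jacquet functional equation shows that $\hat{\ }$ acts on Satake transforms by the factor $\gamma(s,\pi)$, i.e.\ by the ratio of $L$-factors $\prod_i(1-q^{-s}x_i)/(1-q^{s-1}x_i^{-1})$ together with $x\mapsto x^{-1}$. On $S$, Hironaka's scalar functional equation for Hermitian zeta integrals gives the same factor on $\cH_\epsilon$ after the sign dictionary. Hence if $\cS\Phi=\cH_\epsilon\Psi$, then $\cS\hat\Phi$ and $\cH_\epsilon\hat\Psi$ agree as well, for the appropriate $s$-families.

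\emph{Step 4: density and conclusion.} Let $\mathcal M\subset\HS$ be the set of $\Psi$ for which the theorem holds. By \cite{J2}, or by its local deduction from the unit case via Hecke linearity, $\mathcal M$ is an $\cH_G$-submodule containing $\mathbf 1_{K\cap S}$. By Step~3, $\mathcal M$ is stable under Fourier transform combined with the $|\det|^s$ twists, and under truncation by $v(\det)$, since orbital integrals localize on $v(\det a)$. We then show that these operations applied to $\mathbf 1_{K\cap S}$ generate all of $\HS$ as an $\cH_G$-module. Concretely, we would produce the $2^n$ Hironaka basis elements as $v(\det)$-graded pieces of Fourier transforms of twisted unit functions, and then use freeness of rank $2^n$ \cite{H99}.

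The main obstacle is Step~4: checking that the Fourier-generated functions span a module of full rank $2^n$, and not only the cyclic module. Our first attempt would be to compute their Hironaka transforms explicitly via Hall--Littlewood polynomials and verify that the determinant of the resulting $2^n\times2^n$ matrix over $\Lambda^{x^2}$ is a unit. The secondary difficulty is tracking the constants in Step~3 (Weil indices, the $\epsilon$-sign, and $\omega(a_k)^k$), which must come out exactly as $\gamma_{\delta_\epsilon}$.
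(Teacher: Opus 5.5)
\textbf{There is a genuine gap in Step 4, which carries the whole argument.} Steps 1--3 use the right ingredients: sign reduction, Jacquet's Fourier compatibility, and the comparison of the Godement--Jacquet and Hironaka gamma factors.

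You assume that the set $\mathcal M$ of $\Psi$ satisfying the theorem is an $\cH_G$-submodule, citing \cite{J2} ``or its local deduction from the unit case via Hecke linearity.'' But \cite{J2} is a global theorem. It only gives $\cH_G\ast\mathbf 1_{K\cap S}\subset\mathcal M$, the cyclic module, and says nothing about $f\ast\Psi$ for any other matching $\Psi$. Closure of the matching relation under the Hecke action is not available before the theorem is proved; it is a consequence of it. Indeed, the paper recovers \cite{J2} as Corollary~\ref{cor:jacquet}. Without that closure, placing $2^n$ free generators of $\HS$ in $\mathcal M$ says nothing about the rest of $\HS$. Moreover, the generation claim itself (your $2^n\times2^n$ determinant) is left unverified, so the crux of the proof is not supplied.

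\textbf{The missing idea} is to compute exactly what ``Fourier transform, then restrict to $v(\det)=m$'' does on spectral transforms, and to notice that it is a multiplication operator. Set $a=q^{-(n+1)/2}$, $b=q^{(n-1)/2}$, and $G(x)=\prod_i(1-ax_i^{-1})/(1-bx_i)$, expanded at the origin as $G(Tx)=\sum_{j\ge-n}D_jT^j$. Then a homogeneous transform $P$ of degree $d$ goes to $D_{m+d}\,\iota P$, where $\iota P(x)=P(q^{-n}x^{-1})$. The expansion at $T=0$ is the correct one because, for $|T|$ small, the zeta integral of the Schwartz function $\hat\Phi$ is the convergent series over its shells.

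The space $V$ of matching transforms has three properties: it is graded, it contains $1$ by \cite{J1}, and it is $\iota$-stable via the inversion $g\mapsto g^{-1}$, $s\mapsto s^{-1}$. Hence $V$ is closed under multiplication by every $D_j$ and every $\iota(D_j)$. These generate $\Lambda$ as an algebra:
\begin{itemize}
\item $D_{-n}$ and $\iota(D_{-n})$ are nonzero multiples of $e_n^{-1}$ and $e_n$;
\item $e_nD_{r-n}$ is a nonzero multiple of the Hall--Littlewood function $q_r(x;q)$;
\item the $q_r(x;q)$ generate the power sums, since $1-q^r\ne0$.
\end{itemize}
So $V=\Lambda$ follows directly, with no Hecke-module structure, no appeal to \cite{J2}, and no rank-$2^n$ determinant.

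\textbf{Step 3 is also not correct as stated.} In the normalization $\cH_{-1}$, the plain Fourier transform on $V_n$ produces shells whose transforms are $(-1)^{(n+1)(m+d)}[G\cdot\iota P]_m$, not $[G\cdot\iota P]_m$. You must instead use $\omega(\det)^{n+1}\cF(\omega(\det)^{n+1}\Psi)$. This is exactly the twist that converts $1$-matching into matching in Jacquet's sense, which is what makes the transform-side and orbital-side operations compatible.
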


\begin{remark}[Equivalent normalizations]\label{cor:norm}
In the normalization $x_i=q^{-z_i}$ used in \cite[Theorem~10.1]{O09}, the transfer factor is $\gamma_0$.  In the normalization $x_i=-q^{-z_i}$ used in the explicit rank-two formulas of \cite{O06}, the transfer factor is $\gamma_1$.  The two formulations are equivalent by twisting with $\omega\circ\det$.

Our proof will work in the $\epsilon=-1$ normalization; Lemma~\ref{lem:change} then recovers the $\epsilon=+1$ statement.
\end{remark}

\subsection{The local proof in outline}

When $\cS\Phi=\cH_\epsilon\Psi=P\in \Lambda$ we call $(\Phi,\Psi)$ a Hironaka pair. We say the pair matches if $\Phi$ and $\Psi$ have matching orbital integrals with transfer factor $\gamma_1$.
Let $V\subset\Lambda$ be the set of spherical transforms $P$ for which the corresponding Hironaka pair  matches. The key result is that there is an algebra of multipliers $\cA\subset \Lambda$ such that when $D\in \cA$, $P\in V$ we have $DP\in V$. Since the unit Hecke element matching gives $1\in V$, the proof is completed when we show $\cA=\Lambda$.

To construct the multipliers, we rely on two inputs. One is Jacquet's result that the property of having matching orbital integrals is preserved under Fourier transform. The second is the observation that Hironaka pair is preserved under Fourier transform. For this result we need to compare Godement-Jacquet function equation with Hironaka's function equation, and observe that the same Gamma factors appear in these function equations. Therefore one can use Fourier transform to construct new matching Hironaka pairs (though we need to pay attention to compactness issue.)

Finally with the simple observation that $V$ is graded, we can apply Fourier transform to matching homegeneous Hironaka pairs and produce many multipliers (Proposition~6.1).

\subsection{Organization}
Section~\ref{sec:setup} fixes notation and the two spherical transforms.  Section~3 records elementary properties of $V$.  Section~4 computes Fourier transform in spherical coordinates on both sides.  Section~5 recalls Jacquet's compatibility of matching with Fourier transform.  Section~\ref{sec:proof} combines these inputs and the Hall--Littlewood generation argument to give the local proof.

\subsection{Acknowledgment} The argument in this paper was generated largely by Claude Opus, with additional contributions from GPT. The authors’ role consisted primarily of guiding the AI systems toward a local proof that does not rely on the existing proof in the literature, preparing the manuscript, and carefully checking the accuracy of the arguments. We thank H. Jacquet for his encouragement and his insistence on finding a simpler proof of the fundamental lemma.

The second author’s work is partially supported by the NSF grant DMS-2349836 and a Simons Travel Grant.

\section{Notation and spherical transforms}\label{sec:setup}

\subsection{Fields and groups}
Let $\cO_F,\cO_E$ be the rings of integers, $\vp\in F$ a common uniformizer, $q$ the cardinality of the residue field of $F$, and $|\cdot|$ the normalized absolute value of $F$. Let $\omega$ be the unramified quadratic character of $F^\times$, so $\omega(\vp)=-1$. Fix an additive character $\psi$ of $F$ of conductor $\cO_F$.

Put $G=\GL_n(E)$, $G'=\GL_n(F)$, $K=\GL_n(\cO_E)$, $K'=\GL_n(\cO_F)$. Let $V_n$ be the space of Hermitian matrices ${}^t\bar x=x$ in $M_n(E)$ and $S=V_n\cap G$. The group $G$ acts on $S$ by $g\cdot s=gs\,{}^t\bar g$. Let $\HG$ be the spherical Hecke algebra of $G'$ and $\HS$ the space of compactly supported $K$-invariant functions on $S$. For a dominant $\lambda\in\Z^n$ ($\lambda_1\ge\dots\ge\lambda_n$) put $\vp^\lambda=\diag(\vp^{\lambda_1},\dots,\vp^{\lambda_n})$,
\[
c_\lambda=\mathbf 1_{K'\vp^\lambda K'},\qquad C_\lambda=\mathbf 1_{K\cdot\vp^\lambda},
\]
\[
|\lambda|=\sum_i\lambda_i,\qquad n(\lambda)=\sum_i(i-1)\lambda_i,\qquad \langle\rho,\lambda\rangle=\sum_i\tfrac{n+1-2i}{2}\lambda_i=\tfrac{n-1}2|\lambda|-n(\lambda).
\]
The $c_\lambda$ form a basis of $\HG$ and the $C_\lambda$ a basis of $\HS$ \cite{H99}. Write $\tilde\lambda=(-\lambda_n,\dots,-\lambda_1)$.

\subsection{Orbital integrals and matching}
Let $\Phi\in C_c^\infty(M_n(F))$ and $\Psi\in C_c^\infty(V_n)$.  For $a=\diag(a_1,\dots,a_n)$ with $a_1\cdots a_{n-1}\ne0$, put ($N', N$ are the groups of upper triangular matrices with unit diagonal in $G'$ and $G$ respectively)
\[
\Omega'[\Phi,\psi;a]=\int_{N'\times N'}\Phi({}^tu_1au_2)\,\theta(u_1u_2)\,du_1\,du_2,\qquad
\Omega[\Psi,\psi;a]=\int_{N}\Psi({}^t\bar uau)\,\theta(u\bar u)\,du,
\]
where $\theta(u)=\psi(\sum_iu_{i,i+1})$, $\theta(u\bar u)=\psi(\sum_i(u_{i,i+1}+\bar u_{i,i+1}))$, and the Haar measures give $N'(\cO_F)$ and $N(\cO_E)$ volume one.

Let
\[
\gamma_J(a)=\prod_{k=1}^{n-1}\omega(a_1\cdots a_k)
\]
be Jacquet's transfer factor, which is defined whenever
$a_1\cdots a_{n-1}\ne0$. For invertible diagonal $a$, comparison
of exponents gives
\begin{equation}\label{eq:gammas}
	\gamma_\delta(a)=\gamma_J(a)\,\omega(\det a)^{n+\delta}.
\end{equation}
We say that $\Phi$ and $\Psi$ $\delta$-match if
\[
\Omega'[\Phi,\psi;a]
=\gamma_\delta(a)\,\Omega[\Psi,\psi;a]
\]
for every invertible diagonal matrix $a$. We say that they match
in Jacquet's sense if
\[
\Omega'[\Phi,\psi;a]
=\gamma_J(a)\,\Omega[\Psi,\psi;a]
\]
for every diagonal $a$ with $a_1\cdots a_{n-1}\ne0$, allowing
$a_n=0$.

For a function $\Psi$ on $S$, define
\[
\Psi^{\mathrm{tw}}(x)
=\omega(\det x)^{n+1}\Psi(x),\qquad x\in S.
\]
Since the determinant is constant on the orbits, the displayed
identity between transfer factors shows that $\Phi$ and $\Psi$
$1$-match if and only if
\[
\Omega'[\Phi,\psi;a]
=\gamma_J(a)\,\Omega[\Psi^{\mathrm{tw}},\psi;a]
\]
for every invertible diagonal $a$.

When $\Psi\in H_S$, both $\Psi$ and $\Psi^{\mathrm{tw}}$ are
extended by zero to $V_n$; these extensions are Schwartz functions.
We do not assume that determinant twisting preserves arbitrary
Schwartz functions on $V_n$.

\subsection{Spherical transforms}
Let $\Lambda=\C[x_1^{\pm1},\dots,x_n^{\pm1}]^{S_n}=\C[e_1,\dots,e_{n-1},e_n^{\pm1}]$, where $e_r$ denotes the elementary symmetric polynomials. Let $P_\lambda(x;t)$ be the Hall--Littlewood polynomials \cite[III]{Mac}. The Satake isomorphism $\cS\colon\HG\to\Lambda$ is normalized by
\[
\cS(c_\lambda)=q^{\langle\rho,\lambda\rangle}P_\lambda(x;q^{-1})
\]
\cite[V]{Mac}. On the Hermitian side we use Hironaka's spherical functions \cite{H99}
\[
\omega(x;s)=\int_K\prod_{i=1}^n|d_i(k\cdot x)|^{s_i}\,dk,
\]
where $d_i$ is the $i$-th leading principal minor, written in Hironaka's variable $z\in\C^n$:
\begin{equation}\label{eq:sz}
s_i=-z_i+z_{i+1}-1-\tfrac{\pi\sqrt{-1}}{\log q}\ (1\le i\le n-1),\qquad s_n=-z_n+\tfrac{n-1}2-\tfrac{\pi\sqrt{-1}}{\log q}.
\end{equation}
(In \cite{H98} the sign of $\pi\sqrt{-1}/\log q$ is reversed, which changes nothing since only $q^{z}$ matters.) Put $\Psi_z(x)=\omega(x;z)/\omega(1_n;z)$ and let $dx$ be the $G$-invariant measure on $S$ with $\int_{K\cdot1_n}dx=1$. Hironaka's transform is $\varphi\mapsto\int_S\varphi(x)\Psi_z(x^{-1})\,dx$.

\begin{definition}
For $\epsilon\in\{\pm1\}$ and $\varphi\in\HS$, let $\cH_\epsilon\varphi\in\Lambda$ be Hironaka's transform of $\varphi$ written in the variable
\begin{equation}\label{eq:xz}
x_i=\epsilon\,q^{-z_i}\qquad(1\le i\le n).
\end{equation}
\end{definition}

By \cite[Theorem 2]{H99}, $\cH_\epsilon\colon\HS\to\Lambda$ is a linear isomorphism. For either $\epsilon$ it is $\cH_G$-linear, where $f\in\cH_G$ acts on $\Lambda$ by multiplication by the Satake transform of $f$ evaluated at $x^2=(x_1^2,\dots,x_n^2)$, i.e. through base change: by \cite[(2.3)]{H99} the Hecke eigenvalues are functions of $q^{-2z}=x^2$.

\begin{lemma}\label{lem:exact}
For every dominant $\lambda$,
\[
\cH_\epsilon(C_\lambda)(x)=\epsilon^{|\lambda|}(-1)^{n(\lambda)+n|\lambda|}\,q^{\langle\rho,\lambda\rangle}\,P_\lambda(x;-q^{-1}).
\]
\end{lemma}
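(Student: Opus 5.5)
We derive the formula from Hironaka's explicit evaluation of the spherical transform of the characteristic functions $C_\lambda$ \cite{H99}, keeping careful track of the change of variables \eqref{eq:sz} and \eqref{eq:xz}. Hironaka computes $\omega(x;s)$ explicitly: on the orbit $K\cdot\vp^\lambda$ it equals a constant times a Hall--Littlewood polynomial in the variable $q^{-z}$ at parameter $t=-q^{-1}$. The parameter $-q^{-1}$ appears because $E/F$ is unramified, so the unitary groups of the residual Hermitian forms have point counts governed by $-q$ in place of $q$.

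First, we compute the Hironaka transform of $C_\lambda$. Since $C_\lambda$ is the characteristic function of the single orbit $K\cdot\vp^\lambda$, its transform is
\[
\int_S C_\lambda(x)\Psi_z(x^{-1})\,dx=\mathrm{vol}(K\cdot\vp^\lambda)\,\Psi_z(\vp^{-\lambda}).
\]
Note that $\vp^{-\lambda}$ lies in the orbit of $\vp^{\tilde\lambda}$ after a Weyl permutation. We evaluate $\Psi_z(\vp^{\tilde\lambda})$ by Hironaka's explicit formula \cite[Theorem 1]{H99}, written in terms of $s$. It has the shape
\[
\text{(product of $\omega$-signs and powers of $q$)}\cdot P_{\tilde\lambda}(q^{-z};-q^{-1}),
\]
normalized so that it equals $1$ at $\lambda=0$. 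Here the factors $\prod_i|d_i(\vp^{\tilde\lambda})|^{s_i}$ contribute the signs. The shift $-\pi\sqrt{-1}/\log q$ in each $s_i$ turns $q^{-s_i v}$ into $(-1)^{v}q^{\cdots}$, and this produces a sign $(-1)^{\sum_i(\text{partial sums of }\tilde\lambda)}$. That sign is $(-1)^{n(\lambda)+n|\lambda|}$ up to the standard identity $\sum_k(\lambda_1+\dots+\lambda_k)=n|\lambda|-n(\lambda)$, which is congruent to $n|\lambda|+n(\lambda)$ mod $2$.

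Second, we convert to the $x$-variable. We use $P_{\tilde\lambda}(y;t)=P_\lambda(y^{-1};t)$ together with the inversion $x^{-1}$ in the transform, so that the argument becomes $q^{-z}$ itself. Substituting $q^{-z_i}=\epsilon x_i$ and using homogeneity of $P_\lambda$ of degree $|\lambda|$ gives the factor $\epsilon^{|\lambda|}$. Third, we compute the volume $\mathrm{vol}(K\cdot\vp^\lambda)$ relative to $\mathrm{vol}(K\cdot 1_n)=1$. This is a ratio of indices of stabilizers, which are unitary-type groups over the residue field. We then check that its product with the $q$-powers from $|d_i|^{\mathrm{Re}\, s_i}$ and the normalization $\omega(1_n;z)$ collapses to $q^{\langle\rho,\lambda\rangle}$. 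This is the same mechanism as for the Satake transform $\cS(c_\lambda)$, with $q^{-1}$ replaced by $-q^{-1}$.

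The main obstacle is this bookkeeping of constants. Hironaka's normalization constant involves factors $\prod(1-(-q)^{-k})$ and the $\rho$-shift built into \eqref{eq:sz}, and these must cancel exactly against the orbit volume. If the direct cancellation is hard to verify, we instead fix the constant by induction on $\lambda$. One route is to check the case $\lambda=(1^r)$ directly by lattice counting. The other uses the $\cH_G$-linearity of $\cH_\epsilon$ together with the leading-term triangularity of $P_\lambda$, comparing coefficients of the monomial $x^\lambda$ on both sides. Leading-term comparison suffices because both sides are triangular in the $P_\lambda$ basis with respect to dominance order.
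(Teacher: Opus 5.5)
Your skeleton is the paper's: $\cH_\epsilon(C_\lambda)=v(K\cdot\vp^\lambda)\,\Psi_z(\vp^{-\lambda})$ with $\vp^{-\lambda}\in K\cdot\vp^{\tilde\lambda}$, followed by Hironaka's explicit formulas. But this lemma consists of nothing except that bookkeeping, and the bookkeeping is where the proposal breaks down.

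\textbf{The sign.} It cannot be read off from $\prod_i|d_i(\vp^{\tilde\lambda})|^{s_i}$. The function $\omega(x;s)$ is an average over $K$, and the parities of $v(d_i(k\cdot x))$ vary with $k$. Your recipe even gives different answers for two diagonal representatives of the same orbit, $\diag(1,\vp^{-1})$ and $\diag(\vp^{-1},1)$. Carried out as you state it, with the partial sums of $\tilde\lambda$, it gives $\sum_k(\tilde\lambda_1+\dots+\tilde\lambda_k)=-(n(\lambda)+|\lambda|)$, hence the sign $(-1)^{n(\lambda)+|\lambda|}$. That is wrong whenever $n$ is even and $|\lambda|$ is odd. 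You reached $(-1)^{n(\lambda)+n|\lambda|}$ only because you applied the partial-sum identity to $\lambda$ rather than to $\tilde\lambda$.

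Concretely, take $n=2$ and $\lambda=(1,0)$. A direct computation gives $\Psi_z(\diag(1,\vp^{-1}))=\frac{q^{1/2}}{q-1}(q^{-z_1}+q^{-z_2})$ and $v(K\cdot\vp^{(1,0)})=q-1$. Hence $\cH_\epsilon(C_{(1,0)})=\epsilon\,q^{1/2}(x_1+x_2)$, as the lemma says, while your recipe predicts $-\epsilon\,q^{1/2}(x_1+x_2)$.

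\textbf{The variable.} Hironaka's formula expresses $\Psi_z(\vp^\mu)$ through $P_\mu(q^{z};-q^{-1})$, not $P_\mu(q^{-z};-q^{-1})$; already for $n=1$ one has $\Psi_z(\vp^\mu)=(-1)^\mu q^{\mu z}$. The identity $P_{\tilde\lambda}(y;t)=P_\lambda(y^{-1};t)$ alone then yields $P_\lambda(q^{-z};-q^{-1})$. Invoking ``the inversion in the transform'' a second time is double counting, since that inversion was already used in passing from $\vp^\lambda$ to $\vp^{-\lambda}$. So two errors cancel here.

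\textbf{The constant.} The cancellation of $q$-powers against the orbit volume is asserted rather than carried out. Your fallback does not rescue it. First, $\cH_G$ acts only by multiplication by symmetric polynomials in $x_1^2,\dots,x_n^2$, and $\HS$ is free of rank $2^n$ over $\cH_G$. So for $n\ge2$ the elements $C_0$ and the $C_{(1^r)}$ do not generate $\HS$. Second, a leading-coefficient comparison fixes a scalar only once you already know $\cH_\epsilon(C_\lambda)\in\C\,P_\lambda(x;-q^{-1})$, and that is exactly the content of Hironaka's formula.

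\textbf{What is needed.} Insert Hironaka's (2.6),
\[
\Psi_z(\vp^\mu)=(-1)^{n(\mu)+|\mu|}q^{n(\mu)-\frac{n-1}2|\mu|}\frac{w_\mu(-q^{-1})}{w_n(-q^{-1})}P_\mu(q^z;-q^{-1}),
\]
and (2.7),
\[
v(K\cdot\vp^\lambda)=q^{-2n(\lambda)+(n-1)|\lambda|}\frac{w_n(-q^{-1})}{w_\lambda(-q^{-1})},
\]
at $\mu=\tilde\lambda$, using $|\tilde\lambda|=-|\lambda|$ and $n(\tilde\lambda)=n(\lambda)-(n-1)|\lambda|$. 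Then the $w$-factors cancel, the exponents add to $\langle\rho,\lambda\rangle$, and the sign is $(-1)^{n(\lambda)-n|\lambda|}$. Finally, homogeneity of $P_\lambda$ under $q^{-z}=\epsilon x$ supplies $\epsilon^{|\lambda|}$. This is exactly the paper's two-line proof.
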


\begin{proof}
By definition $\cH_\epsilon(C_\lambda)=v(K\cdot\vp^\lambda)\,\Psi_z(\vp^{-\lambda})$, and $\vp^{-\lambda}\in K\cdot\vp^{\tilde\lambda}$. By \cite[(2.6), (2.7)]{H99},
\begin{align*}
\Psi_z(\vp^\mu)
 &=(-1)^{n(\mu)+|\mu|}
   q^{n(\mu)-\frac{n-1}{2}|\mu|}
   \frac{w_\mu(-q^{-1})}{w_n(-q^{-1})}
   P_\mu(q^{z};-q^{-1}),\\
 v(K\cdot\vp^\lambda)
 &=q^{-2n(\lambda)+(n-1)|\lambda|}
   \frac{w_n(-q^{-1})}{w_\lambda(-q^{-1})}.
\end{align*}
with $w_\mu=w_\lambda$ for $\mu=\tilde\lambda$. Using $|\tilde\lambda|=-|\lambda|$, $n(\tilde\lambda)=n(\lambda)-(n-1)|\lambda|$ and $P_{\tilde\lambda}(y;t)=P_\lambda(y^{-1};t)$, the power of $q$ is $q^{\langle\rho,\lambda\rangle}$ and the sign is $(-1)^{n(\lambda)-n|\lambda|}$, while $P_\lambda(q^{-z};-q^{-1})=P_\lambda(\epsilon x;-q^{-1})=\epsilon^{|\lambda|}P_\lambda(x;-q^{-1})$.
\end{proof}

\begin{lemma}\label{lem:change}
Let $\Psi\in\HS$ and $\Phi\in\HG$.
\begin{enumerate}
\item $\cH_{-\epsilon}(\Psi)=\cH_\epsilon(\Psi\cdot\omega\circ\det)$.
\item $\Phi$ and $\Psi$ $\delta$-match if and only if $\Phi$ and $\Psi\cdot\omega\circ\det$ $(1-\delta)$-match.
\end{enumerate}
Consequently, Offen's spherical-transfer theorem in the $\epsilon$ normalization is equivalent to the theorem in the $-\epsilon$ normalization.
\end{lemma}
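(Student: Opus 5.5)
The plan is to prove both parts directly from the definitions, and then obtain the final equivalence as a formal consequence. Twisting by $\omega\circ\det$ is harmless on $\HS$: $\det$ is constant modulo $N_{E/F}(E^\times)$ on each $K$-orbit $K\cdot\vp^\lambda$, since $\det(k\,\vp^\lambda\,{}^t\bar k)=N(\det k)\,\vp^{|\lambda|}$ and $N(\cO_E^\times)\subset\ker\omega$. Hence $\Psi\cdot\omega\circ\det\in\HS$, and concretely
\[
C_\lambda\cdot\omega\circ\det=(-1)^{|\lambda|}C_\lambda .
\]
Since both sides of (1) are linear in $\Psi$, it suffices to check (1) on the basis $C_\lambda$.

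For (1), apply Lemma~\ref{lem:exact}. Passing from $\epsilon$ to $-\epsilon$ changes only the factor $\epsilon^{|\lambda|}$, so
\[
\cH_{-\epsilon}(C_\lambda)=(-1)^{|\lambda|}\cH_\epsilon(C_\lambda)=\cH_\epsilon\bigl((-1)^{|\lambda|}C_\lambda\bigr)=\cH_\epsilon(C_\lambda\cdot\omega\circ\det).
\]
Here $\cH_\epsilon$ is a function of $x$, and replacing $x$ by $-x$ is the same as changing the sign convention.

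For (2), the determinant of ${}^t\bar u a u$ equals $\det a$ for $u\in N$, so the twist pulls out of the orbital integral:
\[
\Omega[\Psi\cdot\omega\circ\det,\psi;a]=\omega(\det a)\,\Omega[\Psi,\psi;a].
\]
By \eqref{eq:gammas}, $\gamma_{1-\delta}(a)=\gamma_\delta(a)\,\omega(\det a)^{1-2\delta}=\gamma_\delta(a)\,\omega(\det a)$, because $\omega^2=1$. Therefore
\[
\gamma_{1-\delta}(a)\,\Omega[\Psi\cdot\omega\circ\det,\psi;a]=\gamma_\delta(a)\,\Omega[\Psi,\psi;a],
\]
and this gives the claimed equivalence of $\delta$-matching and $(1-\delta)$-matching.

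For the consequence, suppose the theorem holds for $\epsilon$, and let $\cS\Phi=\cH_{-\epsilon}\Psi$. By (1), $\cS\Phi=\cH_\epsilon(\Psi\cdot\omega\circ\det)$. The $\epsilon$-theorem then says that $\Phi$ and $\Psi\cdot\omega\circ\det$ $\delta_\epsilon$-match. By (2), $\Phi$ and $\Psi$ $(1-\delta_\epsilon)$-match, and $1-\delta_\epsilon=\delta_{-\epsilon}$, which is the theorem for $-\epsilon$.

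The only point needing care is the invariance claim: $\omega(\det x)$ must be $K$-invariant on $S$, which uses the unramifiedness $N(\cO_E^\times)=\cO_F^\times$. There is no real obstacle beyond this.
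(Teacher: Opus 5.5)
Your proof is correct and follows the paper's argument. For (1) you check the identity on the basis $C_\lambda$ using $\omega\circ\det=(-1)^{|\lambda|}$ on $K\cdot\vp^\lambda$; you read the sign change off Lemma~\ref{lem:exact}, while the paper gets it from $\cH_{-\epsilon}(\Psi)(x)=\cH_\epsilon(\Psi)(-x)$ together with homogeneity. Part (2) and the final deduction via $1-\delta_\epsilon=\delta_{-\epsilon}$ are the same as in the paper; as a minor remark, the $K$-invariance of $\omega\circ\det$ needs only that $\omega$ is trivial on norms, which is automatic, not the equality $N(\cO_E^\times)=\cO_F^\times$.
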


\begin{proof}
(1) By definition $\cH_{-\epsilon}(\Psi)(x)=\cH_\epsilon(\Psi)(-x)$. For $\Psi=C_\lambda$ the right side is $(-1)^{|\lambda|}\cH_\epsilon(C_\lambda)(x)$ by homogeneity, and $(-1)^{|\lambda|}$ is the value of $\omega\circ\det$ on $K\cdot\vp^\lambda$. (2) Since $\det$ is constant on orbits, $\Omega[\Psi\cdot\omega\circ\det;a]=\omega(\det a)\Omega[\Psi;a]$, while $\gamma_{1-\delta}(a)=\gamma_\delta(a)\,\omega(\det a)$ because $\omega^2=1$. For the last assertion, if $\cS\Phi=\cH_{-\epsilon}\Psi$ then $\cS\Phi=\cH_\epsilon(\Psi\cdot\omega\circ\det)$ by (1); by (2), $\Phi$ and $\Psi\cdot\omega\circ\det$ $\delta$-match exactly when $\Phi$ and $\Psi$ $(1-\delta)$-match, and $1-\delta_\epsilon=\delta_{-\epsilon}$.
\end{proof}

\begin{remark}\label{rem:offen}
There are two useful comparisons with Offen's papers.

First, for $n=2$ and $\lambda=(m,k)$, Lemma~\ref{lem:exact} gives
\[
\cH_{-1}(C_{(m,k)})=(-1)^m q^{(m-k)/2}P_{(m,k)}(x;-q^{-1}),
\]
which is the formula of \cite{O06}; thus the direct rank-two calculation there uses $\epsilon=-1$ and transfer factor $\gamma_1$.

Second, write $\HOff$ for the all-rank spherical transform used by Offen in \cite[\S6, especially (6.14)]{O09}.  With the change of variables $x=Z^{-1}$, his formula on the orbit basis is
\[
\HOff(C_\lambda)(x)=(-1)^{n(\lambda)+n|\lambda|}
q^{\langle\rho,\lambda\rangle}P_\lambda(x;-q^{-1}).
\]
Comparison with Lemma~\ref{lem:exact} shows that, in the present notation,
\[
\HOff=\cH_{+1}.
\]
The transfer factor in \cite[Theorem~10.1]{O09} is correspondingly $\gamma_0(a)=\prod_k\omega(a_k)^k$.  Lemma~\ref{lem:change} converts this statement into the equivalent $\cH_{-1}$, $\gamma_1$ formulation.  Thus the apparent difference between the 2006 and 2009 signs is a normalization change, not a difference in the transfer theorem.
\end{remark}

\subsection{The working normalization}
From now on, until the local proof in Section~\ref{sec:proof}, we take $\epsilon=-1$ and write $\cH=\cH_{-1}$. Lemma~\ref{lem:exact} then reads
\begin{equation}\label{eq:exactminus}
\cH(C_\lambda)(x)=\sigma(\lambda)\,q^{\langle\rho,\lambda\rangle}\,P_\lambda(x;-q^{-1}),\qquad \sigma(\lambda)=(-1)^{n(\lambda)+(n+1)|\lambda|}.
\end{equation}
A pair $(\Phi,\Psi)\in\HG\times\HS$ with $\cS\Phi=\cH\Psi$ is called a \emph{Hironaka pair}. Let
\[
V=\{P\in\Lambda:\ \text{the Hironaka pair with transform $P$ $1$-matches}\}.
\]
This is a linear subspace. For $\epsilon=-1$, the spherical-transfer theorem is equivalent to the statement $V=\Lambda$.

\subsection{Shells}
For a function $f$ on $M_n(F)$ or $V_n$ and $m\in\Z$ put $f^{(m)}=f\cdot\mathbf 1_{\{v(\det)=m\}}$. If $f$ is a bi-$K'$-invariant (resp. $K$-invariant) Schwartz function, each $f^{(m)}$ lies in $\HG$ (resp. $\HS$), and $f^{(m)}=0$ for $m$ small. We extend $\cS$ and $\cH$ to such $f$ as formal sums
\[
\cS f=\sum_m\cS(f^{(m)}),\qquad \cH f=\sum_m\cH(f^{(m)}),
\]
where $\cS(f^{(m)})$ and $\cH(f^{(m)})$ are homogeneous of degree $m$, since $c_\lambda,C_\lambda$ have degree $|\lambda|$.

\section{Elementary properties of \texorpdfstring{$V$}{V}}

\begin{lemma}\label{lem:graded}
$V$ is graded: $P\in V$ if and only if every homogeneous component of $P$ lies in $V$.
\end{lemma}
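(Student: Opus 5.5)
The plan is to show that taking a homogeneous component commutes with everything in the definition of $V$. On the transform side this is truncation to a fixed determinant valuation, and on the orbital-integral side it is restriction to diagonal $a$ with fixed $v(\det a)$.

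First, the Hironaka pair attached to a homogeneous component of $P$ is the shell of the pair attached to $P$. Let $(\Phi,\Psi)$ be the Hironaka pair with $\cS\Phi=\cH\Psi=P$, and write $P=\sum_m P_m$ with $P_m$ homogeneous of degree $m$. The elements $c_\lambda$ and $C_\lambda$ are supported on $\{v(\det)=|\lambda|\}$. By the normalization of $\cS$ and by \eqref{eq:exactminus}, $\cS(c_\lambda)$ and $\cH(C_\lambda)$ are homogeneous of degree $|\lambda|$, because $P_\lambda$ is homogeneous of degree $|\lambda|$. Hence $\cS(\Phi^{(m)})=P_m=\cH(\Psi^{(m)})$, so $(\Phi^{(m)},\Psi^{(m)})$ is the Hironaka pair with transform $P_m$. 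Since $\cS$ and $\cH$ are bijections, this pair is unique.

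Second, the orbital integrals see only one shell at a time. Fix an invertible diagonal $a$ and put $m=v(\det a)$. For $u_1,u_2\in N'$ we have $\det({}^tu_1au_2)=\det a$, and for $u\in N$ we have $\det({}^t\bar u a u)=\det a$. Therefore
\[
\Omega'[\Phi,\psi;a]=\Omega'[\Phi^{(m)},\psi;a],\qquad \Omega'[\Phi^{(m')},\psi;a]=0 \ \ (m'\neq m),
\]
and the same holds for $\Omega[\Psi,\psi;a]$ and the shells $\Psi^{(m')}$.

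Now both directions follow. Suppose $P\in V$. For each $m$ and each $a$ with $v(\det a)=m$, the identity $\Omega'[\Phi,\psi;a]=\gamma_1(a)\Omega[\Psi,\psi;a]$ becomes the same identity for $(\Phi^{(m)},\Psi^{(m)})$. For $a$ with $v(\det a)\ne m$, both sides vanish for this pair. Hence $P_m\in V$. Conversely, if every $P_m$ lies in $V$, then $P\in V$ because $V$ is a linear subspace and $P$ is a finite sum of its components. Alternatively, one can sum the shell identities directly.

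There is no real obstacle here. The only point needing care is the uniqueness of the Hironaka pair attached to $P_m$, which comes from the bijectivity of $\cS$ and $\cH$, together with the fact that the orbital integrals are over unipotent groups and so preserve $\det$.
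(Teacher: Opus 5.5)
Your proposal is correct and follows essentially the same argument as the paper: the degree-$m$ component of the transform is the transform of the shell pair $(\Phi^{(m)},\Psi^{(m)})$, and since the unipotent actions preserve $\det$, the orbital integrals at $a$ only see the shell $v(\det a)$. Your explicit treatment of the converse (finite sum plus linearity of $V$) and of the vanishing for $v(\det a)\neq m$ spells out details that the paper leaves implicit.
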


\begin{proof}
The homogeneous component of degree $m$ of the transform of $(\Phi,\Psi)$ is the transform of $(\Phi^{(m)},\Psi^{(m)})$, and $\Omega'[\Phi^{(m)};a]$ is the restriction of $\Omega'[\Phi;a]$ to $\{v(\det a)=m\}$, and similarly for $\Psi$.
\end{proof}

\begin{lemma}\label{lem:unit}
$1\in V$.
\end{lemma}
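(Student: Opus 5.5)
The plan is to identify the Hironaka pair with transform $P=1$ and reduce to Jacquet's fundamental lemma for the unit element \cite{J1}.

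\emph{Step 1: the pair.} On the $G'$ side, $\cS(c_0)=P_0=1$, so $\Phi=c_0=\mathbf 1_{K'}$. On the Hermitian side, (\ref{eq:exactminus}) with $\lambda=0$ gives $\sigma(0)=1$ and $P_0(x;-q^{-1})=1$, so $\cH(C_0)=1$ and $\Psi=C_0=\mathbf 1_{K\cdot 1_n}$. Since $E/F$ is unramified, every Hermitian matrix in $K$ has unit determinant with norm-type valuation $0$, and $K\cdot 1_n=K\cap S$ (all unimodular Hermitian lattices are isometric). Thus the Hironaka pair with transform $1$ is $(\mathbf 1_{K'},\mathbf 1_{K\cap S})$.

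\emph{Step 2: the transfer factor.} We must show that $\Omega'[\mathbf 1_{K'},\psi;a]=\gamma_1(a)\,\Omega[\mathbf 1_{K\cap S},\psi;a]$ for all invertible diagonal $a$. As noted after (\ref{eq:gammas}), this is equivalent to
\[
\Omega'[\mathbf 1_{K'},\psi;a]=\gamma_J(a)\,\Omega[(\mathbf 1_{K\cap S})^{\tw},\psi;a].
\]
On $K\cap S$ we have $v(\det x)=0$. Since $\omega$ is unramified, $\omega(\det x)=1$ there, so $(\mathbf 1_{K\cap S})^{\tw}=\mathbf 1_{K\cap S}$.

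\emph{Step 3: invoke the unit fundamental lemma.} Jacquet's theorem \cite{J1} states exactly that $\Omega'[\mathbf 1_{K'},\psi;a]=\gamma_J(a)\,\Omega[\mathbf 1_{K\cap S},\psi;a]$ for invertible diagonal $a$, with the normalizations used here: $\psi$ of conductor $\cO_F$ and measures giving $N'(\cO_F)$ and $N(\cO_E)$ volume one.

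The main obstacle is bookkeeping rather than mathematics. One must confirm that Jacquet's conventions agree with those of \S\ref{sec:setup}, namely:
\begin{itemize}
\item the character $\theta(u_1u_2)$ and the Hermitian character $\theta(u\bar u)$;
\item the transfer factor written as $\prod_{k=1}^{n-1}\omega(a_1\cdots a_k)$;
\item the use of the function $\mathbf 1_{K\cap S}$ rather than a twisted version.
\end{itemize}
If a sign or a twist by $\omega\circ\det$ differs in Jacquet's normalization, Lemma~\ref{lem:change}(2) absorbs it. Indeed, $\omega\circ\det$ is trivial on $K\cap S$, so such a discrepancy would only change which $\delta$ is claimed, and the graded check at degree $0$ resolves it.
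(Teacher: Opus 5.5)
Your argument is essentially the paper's: you identify the Hironaka pair as $(\mathbf 1_{K'},\mathbf 1_{K\cap S})$, observe that the determinant twist is trivial there (equivalently $\gamma_1=\gamma_J$ on the shell $v(\det)=0$), and invoke Jacquet's unit fundamental lemma. The one correction concerns the citation: the paper uses \cite{J1} for the characteristic functions of $M_n(\cO_F)$ and of $V_n\cap M_n(\cO_E)$, and obtains the statement for $\mathbf 1_{K'}$ and $\mathbf 1_{K\cap S}$ by restricting to the shell $v(\det)=0$ (as in \cite[\S1]{J2}); you should add this one-line restriction rather than asserting that \cite{J1} states the unit-function identity directly.
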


\begin{proof}
The Hironaka pair with transform $1$ is $(c_0,C_0)=(\mathbf 1_{K'},\mathbf 1_{K\cap S})$. These match in Jacquet's sense: restrict the fundamental lemma of \cite{J1} for the characteristic functions of $M_n(\cO_F)$ and of $V_n\cap M_n(\cO_E)$ to the shell $v(\det)=0$, as in \cite[\S1]{J2}. On this shell $\gamma_1=\gamma_J$ by \eqref{eq:gammas}.
\end{proof}

\begin{lemma}\label{lem:inversion}
$V$ is stable under $P(x)\mapsto P(x^{-1})$, and under $\iota\colon P(x)\mapsto P(q^{-n}x^{-1})$.
\end{lemma}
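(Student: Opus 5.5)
We realize each substitution by an explicit operation on the pair $(\Phi,\Psi)$ that preserves both the Hironaka-pair relation and $1$-matching. The natural candidates are $g\mapsto g^{-1}$ (contragredient) on $\HG$, and $x\mapsto x^{-1}$ on $\HS$. The map $\iota$ differs from plain inversion by the scaling $x\mapsto q^{-n}x$. On homogeneous components of degree $m$ this scaling is multiplication by $q^{-nm}$, which is harmless on the matching side by Lemma~\ref{lem:graded}. So the second statement follows from the first together with gradedness, and it suffices to treat $P(x)\mapsto P(x^{-1})$.

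\textbf{Transform side.} For the Satake transform, $c_\lambda(g^{-1})=c_{\tilde\lambda}(g)$. Since $\langle\rho,\tilde\lambda\rangle=\langle\rho,\lambda\rangle$ and $P_{\tilde\lambda}(x;t)=P_\lambda(x^{-1};t)$, we get $\cS(\Phi^\vee)(x)=\cS\Phi(x^{-1})$, where $\Phi^\vee(g)=\Phi(g^{-1})$. On the Hermitian side, $C_\lambda(x^{-1})=C_{\tilde\lambda}(x)$, and by \eqref{eq:exactminus} we need $\sigma(\tilde\lambda)=\sigma(\lambda)$. Using $|\tilde\lambda|=-|\lambda|$ and $n(\tilde\lambda)=n(\lambda)-(n-1)|\lambda|$, the exponent changes by $-(n-1)|\lambda|-2(n+1)|\lambda|$. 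This has the parity of $(n-1)|\lambda|$, so the sign may differ by $(-1)^{(n-1)|\lambda|}$. I would compensate by a further twist, or simply check the parity carefully: the correct Hermitian operation may be $\Psi\mapsto\Psi(x^{-1})\cdot\omega(\det x)^{n-1}$, which changes $\cH$ by exactly $(-1)^{(n-1)|\lambda|}$. Then $(\Phi^\vee,\Psi^\vee)$ (suitably twisted) is the Hironaka pair with transform $P(x^{-1})$.

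\textbf{Matching side.} We compare orbital integrals of inverted functions. On $G'$, $({}^tu_1au_2)^{-1}=u_2^{-1}a^{-1}\,{}^tu_1^{-1}$. Conjugating by the long Weyl element $w$ and applying transpose turns this into an element of the form ${}^tu_1'\,(wa^{-1}w)\,u_2'$, where $u'$ is built from $u^{-1}$ by conjugation by $w$ and $\theta(u')=\theta(u)^{\pm1}$. Since $\Phi$ is bi-$K'$-invariant and $w\in K'$, we get $\Omega'[\Phi^\vee,\psi;a]=\Omega'[\Phi,\bar\psi;a^*]$ with $a^*=wa^{-1}w$. The Hermitian side works in the same way using $x\mapsto w\,{}^t x^{-1}w$, or simply $x^{-1}$ together with $K$-invariance. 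Next, replacing $\bar\psi$ by $\psi$ is harmless: conjugating by a diagonal matrix of $\pm1$'s, which lies in $K'$ and in $K$, changes $\psi$ to $\bar\psi$ on both sides simultaneously.

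\textbf{Transfer factor.} Finally we verify $\gamma_1(a^*)=\gamma_1(a)$, up to the extra $\omega(\det)^{n-1}$ twist if it was needed. Here $a^*=\diag(a_n^{-1},\dots,a_1^{-1})$, so
\[
\gamma_1(a^*)=\prod_k\omega(a_{n+1-k})^{1+k}=\prod_j\omega(a_j)^{n+2-j}.
\]
This equals $\gamma_1(a)\prod_j\omega(a_j)^{n+1}=\gamma_1(a)\,\omega(\det a)^{n+1}$, which is exactly compensated by the twist $\omega(\det)^{n-1}$ (as $\omega^2=1$). So the two parity discrepancies agree, which is the consistency I expect. The main obstacle is this sign and parity bookkeeping: ensuring that the twist forced on the transform side coincides with the one forced on the matching side. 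The change of variables in the unipotent integrals is routine.
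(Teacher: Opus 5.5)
Your proposal is correct and is essentially the paper's proof: invert on both sides, twist the Hermitian side by $\omega(\det)^{n-1}=\omega(\det)^{n+1}$ (this is the paper's $\check\Psi^{\mathrm{tw}}$), check $\sigma(\tilde\lambda)=(-1)^{(n+1)|\lambda|}\sigma(\lambda)$ on transforms and $\gamma_1(wa^{-1}w)=\gamma_1(a)\,\omega(\det a)^{n+1}$ on orbital integrals, and obtain $\iota$ from homogeneity together with Lemma~\ref{lem:graded}. You can drop the hedging, because your own parity count shows this twist is exactly what is required; the transpose on the $G'$ side is also unnecessary, since conjugation by $w$ alone already gives the form ${}^tn_1\,(wa^{-1}w)\,n_2$.
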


\begin{proof}
Let $w=w_n$ be the longest Weyl element and put $\check\Phi(g)=\Phi(g^{-1})$, $\check\Psi(s)=\Psi(s^{-1})$. Write $u_2^{-1}a^{-1}\,{}^tu_1^{-1}=w\,(wu_2^{-1}w)(wa^{-1}w)(w\,{}^tu_1^{-1}w)\,w$. Since $wu_2^{-1}w={}^tn_1$ and $w\,{}^tu_1^{-1}w=n_2$ with $n_1,n_2\in N'$, $\theta(n_1)=\overline{\theta(u_2)}$, $\theta(n_2)=\overline{\theta(u_1)}$, and $\Phi$ is bi-$K'$-invariant, we get
\[
\Omega'[\check\Phi,\psi;a]=\Omega'[\Phi,\bar\psi;wa^{-1}w]=\Omega'[\Phi,\psi;wa^{-1}w];
\]
the last equality follows by conjugating with $d=\diag(1,-1,1,\dots)\in K'$, which commutes with $a$. The same computation, with $n=w\,{}^t\bar u^{-1}w\in N$, so that $wu^{-1}w={}^t\bar n$ and $\theta(n\bar n)=\overline{\theta(u\bar u)}$, gives $\Omega[\check\Psi,\psi;a]=\Omega[\Psi,\psi;wa^{-1}w]$. One checks $\gamma_1(wa^{-1}w)=\gamma_1(a)\,\omega(\det a)^{n+1}$. Hence if $(\Phi,\Psi)$ $1$-match, so do $(\check\Phi,\check\Psi^\tw)$.

On transforms, $\check c_\lambda=c_{\tilde\lambda}$ and $\cS(c_{\tilde\lambda})(x)=\cS(c_\lambda)(x^{-1})$, since $\langle\rho,\tilde\lambda\rangle=\langle\rho,\lambda\rangle$ and $P_{\tilde\lambda}(x;t)=P_\lambda(x^{-1};t)$. On the Hermitian side $\check C_\lambda=C_{\tilde\lambda}$, and the sign in \eqref{eq:exactminus} satisfies $\sigma(\tilde\lambda)=(-1)^{n(\lambda)}=(-1)^{(n+1)|\lambda|}\sigma(\lambda)$. Hence $\cH(\check C_\lambda^\tw)(x)=\cH(C_\lambda)(x^{-1})$. So $(\check\Phi,\check\Psi^\tw)$ is the Hironaka pair of $P(x^{-1})$.

Finally, if $P$ is homogeneous of degree $d$ then $P(q^{-n}x)=q^{-nd}P(x)$; together with Lemma~\ref{lem:graded} this gives stability under $\iota$.
\end{proof}

\section{Fourier transform on spherical transforms}

\subsection{Fourier transforms}
Jacquet's Fourier transforms \cite{J03}, \cite[\S4]{J2} are
\[
\hat\Phi(X)=\int_{M_n(F)}\Phi(Y)\psi(\Tr(-wXwY))\,dY,\qquad
\hat\Psi(X)=\int_{V_n}\Psi(Y)\psi(\Tr(-wXwY))\,dY,
\]
with self-dual measures. On bi-$K'$-invariant functions $\hat\Phi$ coincides with the Godement--Jacquet transform $\int\Phi(Y)\psi(\Tr XY)dY$, because $-w\in K'$. On $K$-invariant functions $\hat\Psi$ coincides with Hironaka's transform $\int\Psi(Y)\psi(\Tr XY)dY$ \cite[(1.2)]{H98}: choose $u\in\cO_E^\times$ with $u\bar u=-1$; then $-wXw=k\cdot X$ for $k=uw\in K$. In all three normalizations the characteristic functions $\Phi_0$ of $M_n(\cO_F)$ and $\Psi_0$ of $V_n\cap M_n(\cO_E)$ are self-dual. We write $\cF$ for these transforms.

For $\Psi\in H_S$, extend $\Psi^{\mathrm{tw}}$ by zero to $V_n$.
This extension is Schwartz. Define the twisted Fourier transform
as a function on $S$ by
\[
\mathcal F^{\mathrm{tw}}\Psi(X)
=\omega(\det X)^{n+1}\,
\mathcal F(\Psi^{\mathrm{tw}})(X),
\qquad X\in S.
\]
Although $\mathcal F(\Psi^{\mathrm{tw}})$ is Schwartz on $V_n$,
the outer determinant twist need not have a Schwartz extension
across the singular locus. Nevertheless, for every $m\in\mathbb Z$,
\[
(\mathcal F^{\mathrm{tw}}\Psi)^{(m)}
=
\bigl((\mathcal F(\Psi^{\mathrm{tw}}))^{(m)}\bigr)^{\mathrm{tw}}
\in H_S.
\]
All subsequent uses of $\mathcal F^{\mathrm{tw}}\Psi$ concern
its values on $S$ or these determinant shells.

Throughout this section
\[
a=q^{-\frac{n+1}2},\qquad b=q^{\frac{n-1}2},\qquad G(x)=\prod_{i=1}^n\frac{1-a\,x_i^{-1}}{1-b\,x_i},\qquad (\iota P)(x)=P(q^{-n}x^{-1}).
\]
Expanding each factor $(1-bx_i)^{-1}=\sum_{k\ge0}b^kx_i^k$ at the origin gives
\begin{equation}\label{eq:Dj}
G(Tx)=\sum_{j\ge-n}D_j(x)\,T^j,\qquad D_j=\sum_{r=0}^n(-a)^r\,b^{r+j}\,e_r(x^{-1})\,h_{r+j}(x),
\end{equation}
where $h_k$ are the complete symmetric polynomials ($h_k=0$ for $k<0$); $D_j\in\Lambda$ is homogeneous of degree $j$. For a Laurent polynomial $Q$ let $[Q]_k$ denote its homogeneous component of degree $k$. For $P\in\Lambda$ and $m\in\Z$ define
\begin{equation}\label{eq:Gm}
[G\cdot\iota P]_m=\sum_{j\ge-n}D_j\,[\iota P]_{m-j}\ \in\Lambda .
\end{equation}
The sum is finite because $\iota P$ is a Laurent polynomial. Equivalently, $[G\cdot\iota P]_m(x)$ is the coefficient of $T^m$ in the Laurent expansion at $T=0$ of the rational function $T\mapsto G(Tx)\,(\iota P)(Tx)$; this is what we mean by the degree-$m$ component of $G\cdot\iota P$ \emph{expanded at the origin}. If $P$ is homogeneous of degree $d$, then $\iota P$ is homogeneous of degree $-d$ and
\begin{equation}\label{eq:Gmhom}
[G\cdot\iota P]_m=D_{m+d}\,\iota P .
\end{equation}

The two propositions below are statements about shells. Recall that for a bi-$K'$-invariant, respectively $K$-invariant, Schwartz function $f$, each shell $f^{(m)}$ lies in $\HG$, respectively $\HS$.

\begin{proposition}\label{prop:GL}
Let $\Phi\in\HG$. For every $m\in\Z$,
\[
\cS\big((\cF\Phi)^{(m)}\big)=\big[G\cdot\iota(\cS\Phi)\big]_m .
\]
\end{proposition}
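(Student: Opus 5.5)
We will prove this with the Godement--Jacquet functional equation for unramified characters. Then we read off shells by viewing the zeta integral as a generating series in the determinant valuation. For an unramified character $\chi$ of the torus with Satake parameter $x=(x_1,\dots,x_n)$, let $\pi_x$ be the unramified principal series and $\phi_x$ its normalized spherical function. For $f\in C_c^\infty(M_n(F))^{K'\times K'}$ put
\[
Z(f,s)=\int_{G'}f(g)\,\phi_x(g)\,|\det g|^{s+\frac{n-1}2}\,dg .
\]
If $f\in\HG$, the spherical property gives $Z(f,s)=\cS f(q^{-s}x)$, up to the convention $\phi_x$ versus $\phi_{x^{-1}}$, which we fix at the start. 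Decomposing a bi-$K'$-invariant Schwartz $f$ into shells gives $Z(f,s)=\sum_m \cS(f^{(m)})(x)\,T^m$ with $T=q^{-s}$. This identity holds as formal Laurent series in $T$, and as functions for $\Re s\gg0$.

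The Godement--Jacquet functional equation reads
\[
Z(\cF\Phi,\tfrac12\cdot\text{shift})=\gamma\cdot Z(\Phi,\dots)\quad\text{with } \phi_x \text{ replaced by } \check\phi_x(g)=\phi_x(g^{-1}).
\]
Concretely,
\[
Z(\cF\Phi,1-s,\check\phi)=\gamma(s,\pi_x,\psi)\,Z(\Phi,s,\phi), \qquad \gamma=\prod_i\frac{1-q^{-(1-s)}x_i^{-1}}{1-q^{-s}x_i}.
\]
The root number is $1$ because $\psi$ is unramified. We rewrite both sides in the variable $T$. On the right side, $Z(\Phi,s,\phi)=\cS\Phi(q^{-s}x)$. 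On the left side, the contragredient has parameter $x^{-1}$, so $Z(\cF\Phi,1-s,\check\phi)=\cS(\cF\Phi)$ evaluated at $q^{-(1-s)}x^{-1}$.

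We then substitute $q^{-s}x\mapsto$ a new variable $y$, i.e. $x=q^{s}y$. This gives
\[
\sum_m \cS((\cF\Phi)^{(m)})(q^{-1}q^{s}\cdot q^{-s}y^{-1}\cdots).
\]
We track the half-integral shifts $\frac{n-1}2$ coming from the normalization $\langle\rho,\lambda\rangle$ in $\cS(c_\lambda)$. This step produces exactly the constants $a=q^{-(n+1)/2}$ and $b=q^{(n-1)/2}$ and the involution $\iota$, yielding
\[
\cS(\cF\Phi)(Ty)=G(Ty)\,(\iota\cS\Phi)(Ty).
\]
This is an identity of meromorphic functions of $T$. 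Careful bookkeeping of these shifts is the first concrete check; we will verify it on $\Phi=\Phi_0$ restricted appropriately, or on $n=1$, where $\cS(\mathbf 1_{\cO^\times})=1$ and $\cF\mathbf 1_{\cO^\times}=\mathbf 1_{\cO}-q^{-1}\mathbf 1_{\vp^{-1}\cO}$ can be computed by hand.

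The main obstacle is the passage from an identity of rational functions to the coefficientwise statement with expansion \emph{at the origin}. The function $\cF\Phi$ is Schwartz but not compactly supported on $G'$; its shells vanish for $m\ll0$ but not for $m\gg0$. Hence its zeta integral converges for $|T|$ small, i.e. $\Re s$ large on the $\cF\Phi$ side. There $\sum_m\cS((\cF\Phi)^{(m)})(y)T^m$ is a convergent Laurent series in $T$ around $0$, and it is bounded below in degree. Meanwhile $\iota\cS\Phi(Ty)$ is a Laurent polynomial in $T$, so the right side $G(Ty)\iota\cS\Phi(Ty)$ is rational in $T$ with poles only at $T=b^{-1}y_i^{-1}$. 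By the functional equation, the two sides agree on a nonempty open set, namely the domain of convergence intersected with generic $y$, and hence as meromorphic functions. Uniqueness of Laurent expansions on a punctured disc around $T=0$ then identifies coefficients. The coefficient on the right is precisely $\sum_j D_j[\iota P]_{m-j}$ by \eqref{eq:Dj} and \eqref{eq:Gm}.

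Finally, both sides of the claimed equality are Laurent polynomials in $y$. We will have established equality for $y$ in a Zariski-dense set of unramified parameters, so the two agree in $\Lambda$.
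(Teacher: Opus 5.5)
Your architecture is the paper's: spherical Godement--Jacquet zeta integrals as generating series of shells, the functional equation as a meromorphic identity, convergence of the $\cF\Phi$ side near $T=0$ because its shells are bounded below, uniqueness of Laurent coefficients on a punctured disc, and Zariski density. Those parts are fine. The gap is the one step where the content of the proposition actually lives: converting the functional equation into $\cS(\cF\Phi)(u)=G(u)\,(\iota\cS\Phi)(u)$. You assert that ``this step produces exactly $a$, $b$ and $\iota$'' without carrying it out, and the formulas you feed into it would not produce it.

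First, your displayed factor $\prod_i(1-q^{-(1-s)}x_i^{-1})/(1-q^{-s}x_i)$ is $L(s,\pi_x)/L(1-s,\tilde\pi_x)$. That is the \emph{reciprocal} of $\gamma(s,\pi_x,\psi)=\epsilon\cdot L(1-s,\tilde\pi_x)/L(s,\pi_x)$. Already for $n=1$ and $\Phi_0=\mathbf 1_{\cO_F}$ you have $Z(\Phi_0,s)=L(s)$ and $Z(\cF\Phi_0,1-s)=L(1-s)$, which forces $\gamma=L(1-s)/L(s)$. Used as written, even with the correct shift, your factor yields $1/G$. Second, with the weight $|\det g|^{s+\frac{n-1}{2}}$ the generating variable is $T=q^{-s-\frac{n-1}{2}}$, not $q^{-s}$. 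The half-integral shift comes from this exponent, not from $\langle\rho,\lambda\rangle$; the $\rho$-normalization is exactly what makes $\int c_\lambda\phi_x=\cS(c_\lambda)(x)$ hold with no further shift.

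With both corrections the bookkeeping is short. Put $u=q^{s-\frac{n+1}{2}}x^{-1}$; the $\cF\Phi$ side is then $\cS(\cF\Phi)(u)$. On the other side, $Z(\Phi,s,\phi_x)=\cS\Phi(q^{-s-\frac{n-1}{2}}x)=\cS\Phi(q^{-n}u^{-1})=(\iota\cS\Phi)(u)$. Finally $q^{-s}x_i=a\,u_i^{-1}$ and $q^{s-1}x_i^{-1}=b\,u_i$, so $\gamma(s,\pi_x,\psi)=\prod_i(1-au_i^{-1})/(1-bu_i)=G(u)$.

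The paper avoids quoting $L$-factor conventions altogether. It observes that the gamma factor depends only on the unramified twist, hence only on $u$, and writes it as an unknown $g(u)$. It then pins $g$ down by applying the functional equation to the self-dual $\Phi_0$, whose zeta integral $\prod_i(1-bu_i)^{-1}$ comes from Lemma~\ref{lem:shells}; since $bq^{-n}=a$, this gives $g=G$. The check on $\Phi_0$ (or $n=1$) that you proposed is exactly this argument, and it would have caught the inversion. It should be part of the proof, not an afterthought.
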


\begin{proof}
Let $\omega_x$ be the zonal spherical function on $G'$ with $\int c_\lambda\,\omega_x=\cS(c_\lambda)(x)$, and let $\pi_x$ be the unramified representation whose spherical function is $\omega_x$. Then $g\mapsto\omega_x(g^{-1})$ is the spherical function of the contragredient and equals $\omega_{x^{-1}}$. For a bi-$K'$-invariant Schwartz function $f$ on $M_n(F)$ put
\[
Z(f,s,x)=\int_{G'}f(g)\,\omega_x(g)\,|\det g|^{s+\frac{n-1}2}\,dg .
\]
For $x$ on the unitary torus $(S^1)^n$ this converges absolutely for $\mathrm{Re}\,s$ large, and since $\int f^{(m)}\omega_x=\cS(f^{(m)})(x)$ is homogeneous of degree $m$,
\begin{equation}\label{eq:Zshell}
Z(f,s,x)=\sum_m T^m\,\cS(f^{(m)})(x)=\cS f(Tx),\qquad T=q^{-s-\frac{n-1}2}.
\end{equation}
The local functional equation of Godement and Jacquet \cite[Theorem 3.3]{GJ}, applied with $\pi=\pi_{x^{-1}}$ and $s$ replaced by $1-s$, reads
\[
Z(\cF f,s,x)=\gamma(1-s,\pi_{x^{-1}},\psi)\,Z(f,1-s,x^{-1}).
\]
Put $u=Tx$. By \eqref{eq:Zshell}, $Z(\cF f,s,x)=\cS(\cF f)(u)$ and $Z(f,1-s,x^{-1})=\cS f(q^{s-\frac{n+1}2}x^{-1})=\cS f(q^{-n}u^{-1})$, the latter a finite sum when $f=\Phi\in\HG$. The factor $\gamma(1-s,\pi_{x^{-1}},\psi)$ depends only on the unramified representation $\pi_{x^{-1}}\otimes|\det|^{1-s}$, hence only on $u$; write it $g(u)$. Thus
\begin{equation}\label{eq:GJmero}
Z(\cF f,s,x)=g(u)\,\cS f(q^{-n}u^{-1})
\end{equation}
as meromorphic functions of $s$, the left side being the meromorphic continuation of the zeta integral. Taking $f=\Phi_0=\cF\Phi_0$, for which $Z(\Phi_0,s,x)=\sum_kb^kh_k(u)=\prod_i(1-bu_i)^{-1}$ by Lemma~\ref{lem:shells} and \eqref{eq:Zshell}, gives $g(u)=G(u)$, since $bq^{-n}=a$.

The identity \eqref{eq:GJmero} does not by itself determine the shells, since its right side has several Laurent expansions in $T$. Fix $x$ on the unitary torus and let $f=\cF\Phi$. For $|T|$ small, by \eqref{eq:Zshell} the left side of \eqref{eq:GJmero} is the absolutely convergent series $\sum_mT^m\cS(f^{(m)})(x)$. The right side is $G(Tx)\,(\iota\cS\Phi)(Tx)$: a Laurent polynomial in $T$ times a function holomorphic on $0<|T|<1/b$, which equals its geometric-series expansion there. By uniqueness of Laurent coefficients on this punctured disc, $\cS(f^{(m)})(x)$ is the coefficient of $T^m$ in the expansion at $T=0$, that is, $[G\cdot\iota(\cS\Phi)]_m(x)$. Both sides are Laurent polynomials in $x$ that agree on the torus, which is Zariski dense; hence they agree identically.
\end{proof}

\begin{proposition}\label{prop:herm}
Let $\Psi\in\HS$. For every $m\in\Z$,
\[
\cH\big((\cF^\tw\Psi)^{(m)}\big)=\big[G\cdot\iota(\cH\Psi)\big]_m .
\]
\end{proposition}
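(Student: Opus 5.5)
We mirror the proof of Proposition~\ref{prop:GL}, replacing the Godement--Jacquet zeta integral by Hironaka's Hermitian zeta integral and the Godement--Jacquet functional equation by Hironaka's scalar functional equation \cite{H98}.

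\textbf{Step 1: a zeta integral that encodes shells.} For a $K$-invariant Schwartz function $f$ on $V_n$ we set
\[
Z_S(f,z)=\int_S f(X)\,\Psi_z(X^{-1})\,|\det X|^{s}\,dX .
\]
In the variable \eqref{eq:xz} with $\epsilon=-1$, multiplying by $|\det|^s$ rescales $x\mapsto Tx$ for a suitable $T=\pm q^{-s-c}$. The shift $c$ is chosen to match the normalization \eqref{eq:sz}, and the sign of $T$ is absorbed by the twist. We then expect, for $x$ on the unitary torus and $\mathrm{Re}\,s\gg0$,
\[
Z_S(f^{\tw},z)=\sum_m T^m\,\cH(f^{(m)})(x),
\]
with $f^{\tw}$ appearing because $\omega(\det X)=(-1)^{v(\det X)}$ exactly compensates the sign $\epsilon=-1$ when we pass between $q^{-z}$ and $x$. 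The twists $\Psi^{\tw}$ and $\cF^{\tw}$ were set up for this purpose, so this bookkeeping should reduce to Lemma~\ref{lem:change}(1) applied shell by shell.

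\textbf{Step 2: apply the functional equation.} Hironaka's functional equation gives
\[
Z_S(\cF\phi,z)=\gamma_S(z)\,Z_S(\phi,z^\ast)
\]
with a scalar factor $\gamma_S$, where $z^\ast$ corresponds to $\Psi_z(X^{-1})\mapsto\Psi_{z}(X)$, i.e.\ $x\mapsto x^{-1}$, combined with $s\mapsto$ its dual. We apply it to $\phi=\Psi^{\tw}$. Using the $W$-invariance of $\Psi_z$ and the inversion identity behind Lemma~\ref{lem:inversion}, the right side becomes
\[
\gamma_S\cdot(\cH\Psi)(q^{-n}u^{-1}),\qquad u=Tx,
\]
a finite Laurent sum. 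The left side is the shell series of $\cF^{\tw}\Psi$. For this, note that $(\cF\Psi^{\tw})^{\tw}$ restricted to shells is exactly $(\cF^{\tw}\Psi)^{(m)}$, which is why only shells are used and why no Schwartz extension of $\cF^{\tw}\Psi$ is needed.

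\textbf{Step 3: identify the gamma factor.} Rather than quoting Hironaka's explicit $\gamma_S$, we determine it by the unit function. We take $f=\Psi_0=\cF\Psi_0$ and compute its shells $\Psi_0^{(m)}=\sum_{|\lambda|=m,\ \lambda_n\ge0}C_\lambda$. By \eqref{eq:exactminus} together with the Hall--Littlewood identity $\sum_\lambda t^{n(\lambda)}P_\lambda(x;t)=\prod_i(1-x_i)^{-1}\prod_{i<j}\frac{1-tx_ix_j}{1-x_ix_j}$ (the Hermitian analogue of Lemma~\ref{lem:shells}), the signs $\sigma(\lambda)$ and $t=-q^{-1}$ should collapse the transform to $\prod_i(1-bu_i)^{-1}$, the same as on the $\GL_n(F)$ side. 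The cross terms cancel after the twist.

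This step is where we expect the main obstacle: we must verify that the twisted unit generating function on the Hermitian side equals exactly $\prod(1-bu_i)^{-1}$. If it does not, we would instead compare Hironaka's explicit scalar $\gamma$ factor in \cite{H98} directly with $G$. Granting this, $\gamma_S(u)=G(u)$ because $bq^{-n}=a$.

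\textbf{Step 4: extract shells.} We conclude exactly as in Proposition~\ref{prop:GL}. For small $|T|$ the left side is a convergent series in $T$, while the right side is $G(Tx)(\iota\cH\Psi)(Tx)$, holomorphic on $0<|T|<1/b$. Uniqueness of Laurent coefficients identifies $\cH((\cF^{\tw}\Psi)^{(m)})(x)$ with $[G\cdot\iota(\cH\Psi)]_m(x)$ on the torus, and Zariski density then gives equality identically.
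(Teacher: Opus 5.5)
Your overall route matches the paper's. It uses Hironaka's scalar functional equation, the passage between Hironaka's kernel $\Psi_z(X)$ and the kernel $\Psi_z(X^{-1})$ of $\cH$, and extraction of shells by Laurent coefficients. In the paper the passage is $\int_S\varphi(X)\Psi_z(X)\,dX=\cH(\varphi^{\tw})(-q^{z})$, and this is where the twist is genuinely needed. It is not needed in your Step~1, since $\int_S f(X)\Psi_z(X^{-1})|\det X|^s\,dX=\sum_m q^{-sm}\cH(f^{(m)})(x)$ already holds untwisted.

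Fixing the gamma factor with $\Psi_0=\cF\Psi_0$ is legitimate; the paper records this in Remark~\ref{rem:consistency}. It spares you Hironaka's explicit $\Gamma_1$ and the behaviour of $\omega(1_n;z)$. But it only works once you have derived the dual map $u\mapsto q^{-n}u^{-1}$, and that must come from Hironaka's $s^*=(s_{n-1},\dots,s_1,-s_1-\dots-s_n+n)$, the character $\chi_*^{n-1}$, and $Z^*=Z$ on $K$-invariant functions, not from analogy.

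The genuine gap is Step~4: you cannot argue ``exactly as in Proposition~\ref{prop:GL}'' on the unitary torus. Hironaka's functional equation relates meromorphic continuations of an integral that converges on $\mathcal R=\{\mathrm{Re}\,s_i>0\ (i<n),\ \mathrm{Re}\,s_n>n\}$. The condition $\mathrm{Re}\,s_i>0$ for $i<n$ means $|y_{i+1}|>q|y_i|$, where $y_i=q^{z_i}$. On a line $u=Tx$ with $x$ unitary, all $|u_i|$ are equal, so the corresponding Hironaka parameter has $\mathrm{Re}\,s_i=-1$ for every $T$. There the left side of the functional equation is only a continuation, and nothing in your argument identifies it with $\sum_m T^m\cH((\cF^{\tw}\Psi)^{(m)})(x)$. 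For $\GL_n$ this identification was free precisely because the torus lies in the Godement--Jacquet region of absolute convergence.

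The paper repairs this by working on $z=z^0+\tau(1,\dots,1)$ with $\mathrm{Re}(z^0_{i+1}-z^0_i)>1$. Along this line only $s_n$ moves, and the line lies in $\mathcal R$ for small $|T|$, where $T=q^\tau$. Absolute convergence there gives the shell decomposition. The coefficient comparison then yields an identity of Laurent polynomials on a nonempty open set of $y^0$, hence identically.

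The justification in Step~3 is also wrong, because the identity you quote is misstated. The product $\prod_i(1-x_i)^{-1}\prod_{i<j}\frac{1-tx_ix_j}{1-x_ix_j}$ is $\sum_\lambda P_\lambda(x;t)$, not $\sum_\lambda t^{n(\lambda)}P_\lambda(x;t)$. No twist could remove the cross factor: twisting by $\omega(\det)^{n+1}$ only multiplies the degree-$k$ shell by $(-1)^{(n+1)k}$, i.e.\ replaces $u$ by $(-1)^{n+1}u$.

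The correct input is $\sum_{|\lambda|=k}t^{n(\lambda)}P_\lambda(x;t)=h_k(x)$. Since $\sigma(\lambda)q^{\langle\rho,\lambda\rangle}=(-1)^{(n+1)|\lambda|}b^{|\lambda|}(-q^{-1})^{n(\lambda)}$, this gives $\cH(\Psi_0^{(k)})=(-1)^{(n+1)k}b^kh_k$ with no cross terms at all (Lemma~\ref{lem:shells}). After the twist the unit generating function is $\prod_i(1-bu_i)^{-1}$. Its ratio at $u$ and at $q^{-n}u^{-1}$ is $G(u)$ because $bq^{-n}=a$. With these two repairs your argument becomes the paper's proof.
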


\begin{proof}
For a $K$-invariant Schwartz function $\varphi$ on $V_n$, Hironaka's zeta function \cite{H98} is
\[
Z(\varphi;s)=\int_{\Omega_n}\varphi(x)\prod_{i=1}^n|d_i(x)|^{s_i}\,d\mu(x)=\int_S\varphi(x)\,\omega(x;s)\,d\mu(x),
\]
where $\Omega_n=\{x\in V_n:\prod d_i(x)\ne0\}$ and $d\mu$ is a $G$-invariant measure; the second equality uses $K$-invariance. It converges absolutely on $\mathcal R=\{\mathrm{Re}\,s_i>0\ (i<n),\ \mathrm{Re}\,s_n>n\}$. Rescaling $d\mu$ multiplies everything below by the same constant, so we take $d\mu=dx$.

\emph{Step 1: the functional equation in Hironaka's variables.} By \cite[Theorem I]{H98} with $\chi=1$ and $\psi$ of conductor $\cO_F$,
\[
Z(\hat\varphi;s)=\Gamma_1(s)\,Z^*(\varphi;\chi_*^{n-1};s^*),\qquad s^*=(s_{n-1},\dots,s_1,-s_1-\dots-s_n+n),
\]
where $\chi_*(t)=(-1)^{v(t)}$ and $Z^*$ is defined with the lower right minors $d_i^*$. For $K$-invariant $\varphi$, the substitution $x\mapsto wxw$ ($w\in K$, $d_i^*(wxw)=d_i(x)$) gives $Z^*=Z$. Writing
\[
\chi_*^{n-1}(\det x)=|\det x|^{(n-1)\pi\sqrt{-1}/\log q}
\]
and absorbing this factor into $s^*_n$, a direct computation with \eqref{eq:sz} shows that the resulting spectral parameter satisfies
\[
z^*_i=-z_{n+1-i}-n\pmod{\tfrac{2\pi\sqrt{-1}}{\log q}},\qquad\text{i.e. } y\mapsto q^{-n}y^{-1}\ \text{as multisets, where } y_i=q^{z_i}.
\]
Hironaka gives $\Gamma_1$ in these variables as
\[
\Gamma_1=\prod_{i=1}^n\frac{1+q^{-z_i-\frac{n+1}2}}{1+q^{z_i+\frac{n-1}2}}=\prod_{i=1}^n\frac{1+a\,y_i^{-1}}{1+b\,y_i}.
\]

\emph{Step 2: reduction to symmetric functions.} For $\varphi\in\HS$ put $A_\varphi(y)=\int_S\varphi(x)\Psi_z(x)\,dx$; by \cite[(2.6)]{H99} this is a symmetric Laurent polynomial in $y$, and $Z(\varphi;z)=\omega(1_n;z)A_\varphi(y)$. Extend $A$ shell by shell to $K$-invariant Schwartz $\varphi$. By \cite[(2.5)]{H99},
\[
\omega(1_n;z)=c_n\prod_{1\le i<j\le n}\frac{y_j-q^{-1}y_i}{y_j+y_i},\qquad c_n\ne0,
\]
which is invariant under $y\mapsto q^{-n}y^{-1}$ with reversed order, and homogeneous of degree $0$. Hence, for $\varphi\in\HS$,
\begin{equation}\label{eq:AFE}
\frac{Z(\hat\varphi;z)}{\omega(1_n;z)}=\Gamma_1(y)\,A_\varphi(q^{-n}y^{-1})
\end{equation}
as meromorphic functions, the left side being the meromorphic continuation of the zeta integral of the Schwartz function $\hat\varphi$.

\emph{Step 3: comparison with $\cH$.} By \cite[(2.6), (2.7)]{H99},
\[
A_{C_\lambda}(y)
 =v(K\cdot\vp^\lambda)\Psi_z(\vp^\lambda)
 =(-1)^{n(\lambda)+|\lambda|}
   q^{\langle\rho,\lambda\rangle}
   P_\lambda(y;-q^{-1}).
\]
Comparing with \eqref{eq:exactminus} gives, for all $\varphi$,
\begin{equation}\label{eq:AH}
A_\varphi(y)=\cH(\varphi^\tw)(-y).
\end{equation}
Applying \eqref{eq:AFE} to $\varphi=\Psi^\tw$, substituting $y=-x$, and using \eqref{eq:AH} together with $\Gamma_1(-x)=G(x)$, the right side of \eqref{eq:AFE} becomes $G(x)\,(\iota\cH\Psi)(x)$. As in the proof of Proposition~\ref{prop:GL}, this meromorphic identity does not by itself determine the shells.

\emph{Step 4: shells.} Let $\varphi=\cF(\Psi^\tw)$. It is Schwartz, so $v(\det)\ge m_0$ on its support, and each $\varphi^{(m)}$ lies in $\HS$. On $\mathcal R$, absolute convergence applied to $|\varphi|$ gives $Z(\varphi;s)=\sum_{m\ge m_0}Z(\varphi^{(m)};s)$. Fix $z^0$ with $\mathrm{Re}(z^0_{i+1}-z^0_i)>1$ for $i<n$, so that $\mathrm{Re}\,s_i>0$ for $i<n$, and consider $z=z^0+\tau(1,\dots,1)$, i.e. $y=Ty^0$ with $T=q^\tau$. Along this line only $s_n$ varies, and $\mathrm{Re}\,s_n>n$ means $|T|<R$ for some $R>0$. Since $\omega(1_n;z)$ is homogeneous of degree $0$ and nonzero at $y^0$,
\[
\sum_{m\ge m_0}T^m A_{\varphi^{(m)}}(y^0)=\Gamma_1(Ty^0)\,A_{\Psi^\tw}(q^{-n}T^{-1}(y^0)^{-1})\qquad(0<|T|<R),
\]
the left side converging absolutely. The right side is a Laurent polynomial in $T$ times $\Gamma_1(Ty^0)$, which is holomorphic for $0<|T|<1/(b\max_i|y^0_i|)$ and there equals its geometric-series expansion. By uniqueness of Laurent coefficients on a common punctured disc, $A_{\varphi^{(m)}}(y^0)$ is the degree-$m$ component of the origin expansion of $\Gamma_1\cdot\iota A_{\Psi^\tw}$ at $y^0$. Both are Laurent polynomials in $y$ agreeing on a non-empty open set, hence identically. Finally, $(\cF^\tw\Psi)^{(m)}=(\varphi^{(m)})^\tw$, so by \eqref{eq:AH} $A_{\varphi^{(m)}}(-x)=\cH\big((\cF^\tw\Psi)^{(m)}\big)(x)$, while the substitution $y=-x$ carries the origin expansion of $\Gamma_1\cdot\iota A_{\Psi^\tw}$ to that of $G\cdot\iota(\cH\Psi)$, degree by degree. This gives $\cH\big((\cF^\tw\Psi)^{(m)}\big)=[G\cdot\iota(\cH\Psi)]_m$.
\end{proof}

\begin{remark}\label{rem:consistency}
Applying Hironaka's functional equation from Step 1 directly
to the self-dual Schwartz function $\Psi_0$ gives
\[
\Gamma_1(y)
=\frac{A_{\Psi_0}(y)}
{A_{\Psi_0}(q^{-n}y^{-1})},
\]
where $A_{\Psi_0}$ is interpreted meromorphically. This is confirmed independently by Lemma~\ref{lem:shells}, which gives $A_{\Psi_0}(y)=\prod_i(1+by_i)^{-1}$. In particular, given that the functional equation of \cite{H98} is scalar on $K$-invariant functions, its gamma factor is forced.
\end{remark}

\begin{lemma}\label{lem:shells}
Let $\Phi_0^{(k)}$ and $\Psi_0^{(k)}$ be the restrictions of $\Phi_0$ and $\Psi_0$ to $\{v(\det)=k\}$. Then
\[
\cS(\Phi_0^{(k)})=q^{\frac{n-1}2k}h_k(x),\qquad \cH(\Psi_0^{(k)})=(-1)^{(n+1)k}q^{\frac{n-1}2k}h_k(x).
\]
\end{lemma}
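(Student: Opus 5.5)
The plan is to decompose each shell into the orbit bases $c_\lambda$ and $C_\lambda$, apply the Satake normalization and \eqref{eq:exactminus}, and reduce both claims to one Hall--Littlewood identity. That identity is \cite[III, (2.10)]{Mac}: in $n$ variables and for any parameter $t$,
\[
\sum_{\ell(\lambda)\le n}t^{n(\lambda)}P_\lambda(x;t)=\prod_{i=1}^n\frac1{1-x_i}.
\]
Taking homogeneous components gives
\[
h_k(x)=\sum_{|\lambda|=k,\ \lambda_n\ge0}t^{n(\lambda)}P_\lambda(x;t).
\]
We will use it once with $t=q^{-1}$ and once with $t=-q^{-1}$.

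On the $\GL_n(F)$ side, the Cartan decomposition shows that $M_n(\cO_F)\cap G'$ is the disjoint union of the double cosets $K'\vp^\lambda K'$ with $\lambda$ dominant and $\lambda_n\ge0$. Since $v(\det)=|\lambda|$ on such a coset, we get $\Phi_0^{(k)}=\sum_{|\lambda|=k,\lambda_n\ge0}c_\lambda$. By the normalization of $\cS$ this gives
\[
\cS(\Phi_0^{(k)})=\sum_{\lambda} q^{\langle\rho,\lambda\rangle}P_\lambda(x;q^{-1}),
\]
with $\lambda$ over the same index set. We have $q^{\langle\rho,\lambda\rangle}=q^{\frac{n-1}2k}q^{-n(\lambda)}$, so the identity with $t=q^{-1}$ yields $q^{\frac{n-1}2k}h_k(x)$.

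On the Hermitian side, the analogous fact is needed. The set $V_n\cap M_n(\cO_E)\cap S$ should be the disjoint union of the orbits $K\cdot\vp^\lambda$ with $\lambda$ dominant and $\lambda_n\ge0$. One inclusion is clear. For the other, every $x\in S$ lies in a unique orbit $K\cdot\vp^\mu$ (the basis statement for $\HS$ from \cite{H99}, i.e.\ the Jacobowitz classification of Hermitian forms for unramified $E/F$). The entries of $x=k\vp^\mu{}^t\bar k$ are integral exactly when those of $\vp^\mu$ are, that is, when $\mu_n\ge0$. Hence $\Psi_0^{(k)}=\sum_{|\lambda|=k,\lambda_n\ge0}C_\lambda$. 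By \eqref{eq:exactminus},
\[
\cH(\Psi_0^{(k)})=\sum_\lambda(-1)^{n(\lambda)+(n+1)k}q^{\frac{n-1}2k}q^{-n(\lambda)}P_\lambda(x;-q^{-1})=(-1)^{(n+1)k}q^{\frac{n-1}2k}\sum_\lambda(-q^{-1})^{n(\lambda)}P_\lambda(x;-q^{-1}).
\]
The identity with $t=-q^{-1}$ then gives $(-1)^{(n+1)k}q^{\frac{n-1}2k}h_k(x)$.

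The only non-formal point is the orbit description on the Hermitian side, namely that integrality of $x$ is equivalent to $\lambda_n\ge0$ for its orbit representative. I would take this from the orbit classification underlying the basis $\{C_\lambda\}$ of $\HS$ \cite{H99}. The Hall--Littlewood identity itself is standard and holds for arbitrary $t$, so the negative parameter causes no difficulty.
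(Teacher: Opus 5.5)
Your proposal is correct and follows the paper's proof: you write the shells as $\sum_{|\lambda|=k,\,\lambda_n\ge0}c_\lambda$ and $\sum_{|\lambda|=k,\,\lambda_n\ge0}C_\lambda$, rewrite $q^{\langle\rho,\lambda\rangle}=q^{\frac{n-1}2k}q^{-n(\lambda)}$, and apply $h_k=\sum_{|\lambda|=k}t^{n(\lambda)}P_\lambda(x;t)$ at $t=q^{-1}$ and, after absorbing the sign from \eqref{eq:exactminus}, at $t=-q^{-1}$. Your extra justification of the Hermitian orbit decomposition (integrality of $k\vp^\mu\,{}^t\bar k$ is equivalent to $\mu_n\ge0$) fills in a step the paper leaves implicit.
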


\begin{proof}
$\Phi_0^{(k)}=\sum_{\lambda\ge0,|\lambda|=k}c_\lambda$ and $\Psi_0^{(k)}=\sum_{\lambda\ge0,|\lambda|=k}C_\lambda$. Write $q^{\langle\rho,\lambda\rangle}=q^{\frac{n-1}2k}q^{-n(\lambda)}$ and use $h_k=\sum_{|\lambda|=k}t^{n(\lambda)}P_\lambda(x;t)$ \cite[III, (2.10)]{Mac} with $t=q^{-1}$, respectively, together with \eqref{eq:exactminus}, with $t=-q^{-1}$.
\end{proof}

\section{Fourier transform preserves matching}

\begin{proposition}\label{prop:fourier}
	Let $\Phi\in H_{G'}$ and $\Psi\in H_S$, viewed by extension
	by zero as Schwartz functions on $M_n(F)$ and $V_n$,
	respectively. If $\Phi$ and $\Psi$ $1$-match, then
	\[
	\Omega'[\mathcal F\Phi,\psi;a]
	=
	\gamma_1(a)\,
	\Omega[\mathcal F^{\mathrm{tw}}\Psi,\psi;a]
	\]
	for every invertible diagonal matrix $a$.
\end{proposition}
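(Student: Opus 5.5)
The plan is to reduce to Jacquet's theorem \cite{J03}, \cite[\S4]{J2}: if Schwartz functions $\Phi$ on $M_n(F)$ and $\Psi$ on $V_n$ match in Jacquet's sense (transfer factor $\gamma_J$, for all diagonal $a$ with $a_1\cdots a_{n-1}\ne0$, including $a_n=0$), then $\cF\Phi$ and $\cF\Psi$ also match in Jacquet's sense. The hypothesis here is $1$-matching of $\Phi$ and $\Psi$. By the remark following \eqref{eq:gammas}, this says exactly that
\[
\Omega'[\Phi,\psi;a]=\gamma_J(a)\,\Omega[\Psi^\tw,\psi;a]
\]
for all invertible diagonal $a$. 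So the natural pair to feed into Jacquet's theorem is $(\Phi,\Psi^\tw)$, with $\Psi^\tw$ extended by zero, which is Schwartz.

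\emph{Step 1.} Upgrade the identity above to Jacquet-sense matching, i.e.\ also handle the singular diagonal $a$ with $a_n=0$, $a_1\cdots a_{n-1}\ne0$. Since $\Phi$ is supported in $G'$, we have $\Phi({}^tu_1au_2)=0$ when $\det a=0$. Likewise $\Psi^\tw({}^t\bar uau)=0$, because $\Psi^\tw$ vanishes off $S$. So both sides vanish at such $a$, and $(\Phi,\Psi^\tw)$ match in Jacquet's sense.

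\emph{Step 2.} Apply Jacquet's theorem to get
\[
\Omega'[\cF\Phi,\psi;a]=\gamma_J(a)\,\Omega[\cF(\Psi^\tw),\psi;a]
\]
for all $a$ with $a_1\cdots a_{n-1}\ne0$. The Fourier transforms in Jacquet's theorem are the ones with kernel $\psi(\Tr(-wXwY))$ and self-dual measures, which is how $\cF$ is defined here. The only point to verify is that Jacquet's additive character and measure normalizations agree with ours, namely $\psi$ of conductor $\cO_F$ and $N'(\cO_F)$, $N(\cO_E)$ of volume one.

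\emph{Step 3.} Restrict to invertible diagonal $a$ and untwist. The determinant is constant on the orbit $\{{}^t\bar uau\}$, so
\[
\Omega[\cF(\Psi^\tw),\psi;a]=\omega(\det a)^{n+1}\,\Omega[\cF^\tw\Psi,\psi;a].
\]
Convergence is not an issue, since for fixed invertible $a$ the orbit lies in one shell and the integral only sees $(\cF^\tw\Psi)^{(v(\det a))}\in\HS$. Combining this with $\gamma_J(a)\omega(\det a)^{n+1}=\gamma_1(a)$ from \eqref{eq:gammas} gives the claim.

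The main obstacle is Step 2: checking that Jacquet's Fourier compatibility is stated with exactly this $\gamma_J$ and these normalizations, and for arbitrary Schwartz functions rather than only $\Phi_0,\Psi_0$. If a constant or sign appears, I would calibrate it using the unit case, where $\Phi_0$ and $\Psi_0$ are self-dual and match by \cite{J1}.
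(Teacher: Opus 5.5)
Your proposal is correct and follows the paper's proof step for step: pass to the pair $(\Phi,\Psi^{\tw})$, handle $a_n=0$ by the support condition, apply Jacquet's Fourier compatibility, and untwist on the orbit using $\gamma_J(a)\,\omega(\det a)^{n+1}=\gamma_1(a)$. The character normalization you leave open is settled in the paper by noting that Jacquet's theorem gives matching of $\mathcal F\Phi$ and $\mathcal F(\Psi^{\tw})$ for $\bar\psi$ rather than $\psi$. For these bi-$K'$-invariant and $K$-invariant functions, conjugation by $\diag(1,-1,1,\dots)$ identifies the $\bar\psi$- and $\psi$-orbital integrals. A unit-case calibration would not detect this switch, since $\Phi_0$ and $\Psi_0$ match for both characters, but the conjugation argument is all that is needed.
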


\begin{proof}
Define normalized orbital integrals
\[
\tilde\Omega'[\Phi,\psi;a]=\prod_{i=1}^{n-1}|a_1\cdots a_i|\,\Omega'[\Phi,\psi;a],\qquad
\tilde\Omega[\Psi,\psi;a]=\prod_{i=1}^{n-1}\omega(a_1\cdots a_i)|a_1\cdots a_i|\,\Omega[\Psi,\psi;a],
\]
so that matching in Jacquet's sense means $\tilde\Omega'[\Phi,\psi;\cdot]=\tilde\Omega[\Psi,\psi;\cdot]$. Jacquet's theorem \cite{J03} (see \cite[(31), (32)]{J2}) states that a single operator, the Kloosterman transform $K_{n,\psi}$, satisfies
\[
K_{n,\psi}\,\tilde\Omega'[\Phi,\psi;\cdot]=\tilde\Omega'[\hat\Phi,\bar\psi;\cdot],\qquad K_{n,\psi}\,\tilde\Omega[\Psi,\psi;\cdot]=\tilde\Omega[\hat\Psi,\bar\psi;\cdot].
\]

Since $\Psi\in H_S$, the zero extension of
$\Psi^{\mathrm{tw}}$ is Schwartz on $V_n$. The assumed
$1$-matching and (1) imply that $\Phi$ and
$\Psi^{\mathrm{tw}}$ match in Jacquet's sense on invertible
diagonal matrices.

If $a_n=0$, every matrix occurring in either orbital integral
is singular. Both orbital integrals therefore vanish, because
$\Phi$ and $\Psi^{\mathrm{tw}}$ are supported on invertible
matrices. Thus $\Phi$ and $\Psi^{\mathrm{tw}}$ match in
Jacquet's sense on its full domain.

Jacquet's theorem now shows that $\mathcal F\Phi$ and
$\mathcal F(\Psi^{\mathrm{tw}})$ match in Jacquet's sense
for $\bar\psi$. For the invariant functions under consideration,
conjugation by $\operatorname{diag}(1,-1,1,\ldots)$ shows that
the orbital integrals for $\bar\psi$ and $\psi$ coincide.
Consequently, for invertible diagonal $a$,
\[
\Omega'[\mathcal F\Phi,\psi;a]
=
\gamma_J(a)\,
\Omega[\mathcal F(\Psi^{\mathrm{tw}}),\psi;a].
\]
On such an orbit, the definition of
$\mathcal F^{\mathrm{tw}}$ gives
\[
\Omega[\mathcal F^{\mathrm{tw}}\Psi,\psi;a]
=
\omega(\det a)^{n+1}
\Omega[\mathcal F(\Psi^{\mathrm{tw}}),\psi;a].
\]
Combining this identity with \eqref{eq:gammas} proves the proposition.
\end{proof}

\section{A local proof of Offen's spherical-transfer theorem}\label{sec:proof}

\begin{proposition}\label{prop:closure}
For every $P\in V$ and every $j\in\Z$, both $D_jP$ and $\iota(D_j)P$ lie in $V$.
\end{proposition}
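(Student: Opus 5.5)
By Lemma~\ref{lem:graded} it suffices to treat homogeneous $P\in V$, say of degree $d$. The plan has four steps: apply the Fourier transform to the Hironaka pair of $P$, read off its shells, apply $\iota$ to land on $D_jP$, and then use Lemma~\ref{lem:inversion} to get $\iota(D_j)P$.

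Let $(\Phi,\Psi)$ be the Hironaka pair with $\cS\Phi=\cH\Psi=P$; it $1$-matches by assumption. By Proposition~\ref{prop:fourier}, $\cF\Phi$ and $\cF^\tw\Psi$ satisfy $\Omega'[\cF\Phi,\psi;a]=\gamma_1(a)\Omega[\cF^\tw\Psi,\psi;a]$ for all invertible diagonal $a$.

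Next, restrict to a shell. Both $\cF\Phi$ and $\cF^\tw\Psi$ are invariant, and the integrands of the orbital integrals at $a$ only see the shell $v(\det)=v(\det a)$. Hence for each $m$ the pair $\bigl((\cF\Phi)^{(m)},(\cF^\tw\Psi)^{(m)}\bigr)\in\HG\times\HS$ $1$-matches. The same shell argument as in Lemma~\ref{lem:graded} applies; a minor point is that $\cF^\tw\Psi$ need not be Schwartz, but each shell is in $\HS$ and orbital integrals are computed orbitwise.

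By Propositions~\ref{prop:GL} and \ref{prop:herm}, both shells have the same transform $[G\cdot\iota P]_m$. So this pair is a Hironaka pair, and $[G\cdot\iota P]_m\in V$. Since $P$ is homogeneous, \eqref{eq:Gmhom} gives $[G\cdot\iota P]_m=D_{m+d}\,\iota P$. As $m$ ranges over $\Z$, this yields $D_j\,\iota P\in V$ for every $j$. Here $D_j=0$ for $j<-n$, which is harmless since $0\in V$.

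Now apply this to $\iota P$ in place of $P$. The element $\iota P$ lies in $V$ by Lemma~\ref{lem:inversion} and is homogeneous of degree $-d$. Since $\iota$ is an involution, we obtain $D_jP=D_j\,\iota(\iota P)\in V$.

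For the second family, apply $\iota$ to $D_j\,\iota P\in V$. Lemma~\ref{lem:inversion} keeps us in $V$, and since $\iota$ is a ring automorphism of $\Lambda$ this gives $\iota(D_j)\,P\in V$.

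The main point to check carefully is the shell-matching step. One has to verify that restriction to $\{v(\det)=m\}$ commutes with orbital integrals on invertible diagonal $a$, even though $\cF^\tw\Psi$ is only defined on $S$. The orbital integral of $\cF^\tw\Psi$ at $a$ involves only points of the orbit of $a$, all of which have determinant valuation $v(\det a)$. It therefore equals the orbital integral of the shell $(\cF^\tw\Psi)^{(v(\det a))}$ and vanishes for every other shell, and likewise on the $G'$ side. Everything else is formal bookkeeping with $\iota$ and homogeneity.
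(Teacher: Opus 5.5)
Your proposal is correct and follows the paper's proof step for step. You reduce to homogeneous $P$, apply Proposition~\ref{prop:fourier}, and restrict to the shells $v(\det)=m$. Propositions~\ref{prop:GL} and~\ref{prop:herm} together with \eqref{eq:Gmhom} then identify the common transform as $D_{m+d}\,\iota P$. Finally, the $\iota$-stability of Lemma~\ref{lem:inversion}, with $\iota$ an involutive ring automorphism, gives $D_jP$ and $\iota(D_j)P$. Your remark that the orbital integrals at $a$ see only the shell $v(\det)=v(\det a)$ is exactly the justification the paper compresses into ``restricting this identity to $v(\det a)=m$.''
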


\begin{proof}
By Lemma~\ref{lem:graded} we may assume $P$ homogeneous of degree $d$. Let $(\Phi,\Psi)$ be its Hironaka pair; these functions are compactly supported in $G'$ and $S$, hence Schwartz on $M_n(F)$ and $V_n$. By Proposition \ref{prop:fourier}, $\mathcal F\Phi$ and
$\mathcal F^{\mathrm{tw}}\Psi$ $1$-match on invertible
diagonal matrices. Restricting this identity to
$v(\det a)=m$ shows that their shells
\[
(\mathcal F\Phi)^{(m)}\in H_{G'},
\qquad
(\mathcal F^{\mathrm{tw}}\Psi)^{(m)}\in H_S
\]
also $1$-match. The second inclusion follows from the
shellwise definition in Section 4.1. By Propositions~\ref{prop:GL} and~\ref{prop:herm}, both have transform $[G\cdot\iota P]_m$, which equals $D_{m+d}\,\iota P$ by \eqref{eq:Gmhom}. Hence $D_j\,\iota P\in V$ for all $j$ and all $P\in V$. By Lemma~\ref{lem:inversion}, $\iota P\in V$, and applying this to $\iota P$ gives $D_jP\in V$. Applying $\iota$ to $D_j\,\iota P\in V$ gives $\iota(D_j)P\in V$.

\end{proof}

By Lemma~\ref{lem:unit} and Proposition~\ref{prop:closure}, $V$ contains the subalgebra $\cA\subset\Lambda$ generated by all $D_j$ and $\iota(D_j)$.

\begin{proposition}\label{prop:generation}
$\cA=\Lambda$.
\end{proposition}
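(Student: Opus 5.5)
The plan is to show directly that $\cA$ contains the generators $e_1,\dots,e_{n-1},e_n,e_n^{-1}$ of $\Lambda=\C[e_1,\dots,e_{n-1},e_n^{\pm1}]$. We first rewrite the $D_j$ of \eqref{eq:Dj} in terms of $e_r(x)$ and $h_k(x)$ alone, then extract the generators from a triangular system in increasing degree.

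\emph{Step 1: $e_n^{\pm1}\in\cA$.} Take $j=-n$ in \eqref{eq:Dj}. Since $h_{r-n}=0$ unless $r=n$, only the $r=n$ term survives, giving
\[
D_{-n}=(-a)^n e_n(x^{-1})=(-a)^n e_n(x)^{-1}.
\]
Hence $e_n^{-1}\in\cA$. Applying $\iota$ gives
\[
\iota(D_{-n})(x)=(-a)^n e_n(q^{-n}x^{-1})^{-1}=(-a)^n q^{n^2}e_n(x),
\]
so $e_n\in\cA$ as well. This is the only place the $\iota(D_j)$ are needed.

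\emph{Step 2: the triangular system.} Use $e_r(x^{-1})=e_{n-r}(x)/e_n(x)$ and reindex with $i=n-r$ and $j=k-n$. This gives, for $0\le k\le n-1$,
\[
e_n\,D_{k-n}=\sum_{i=0}^{k}(-a)^{n-i}\,b^{k-i}\,e_i\,h_{k-i}.
\]
The upper limit is $k$ because $h_{k-i}=0$ for $i>k$. The terms with $0<i<k$ involve only $e_1,\dots,e_{k-1}$, since $h_{k-i}$ is a polynomial in $e_1,\dots,e_{k-i}$. The term $i=k$ contributes $(-a)^{n-k}e_k$. The term $i=0$ contributes $(-a)^n b^k h_k$, and by Newton's identity $h_k=(-1)^{k-1}e_k+(\text{polynomial in }e_1,\dots,e_{k-1})$. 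So modulo $\C[e_1,\dots,e_{k-1}]$,
\[
e_nD_{k-n}\equiv (-a)^{n-k}\bigl(1-(ab)^k\bigr)e_k=(-a)^{n-k}(1-q^{-k})\,e_k,
\]
using $ab=q^{-1}$.

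\emph{Step 3: induction.} For $1\le k\le n-1$ the coefficient $(-a)^{n-k}(1-q^{-k})$ is nonzero. By induction on $k$, assuming $e_1,\dots,e_{k-1}\in\cA$ and using $e_n\in\cA$, we obtain $e_k\in\cA$. Hence all generators lie in $\cA$, and $\cA=\Lambda$.

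The main point to get right is this coefficient computation: the contributions to $e_k$ from the $i=k$ and $i=0$ terms must not cancel. They do not, because $(ab)^k=q^{-k}\ne1$ for $k\ge1$. The remaining steps are bookkeeping with \eqref{eq:Dj}.
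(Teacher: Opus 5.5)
Your proof is correct, and it follows the paper's structure: the same Step 1 ($D_{-n}$ and $\iota(D_{-n})$ give $e_n^{\pm1}$), then showing that the remaining $D_{k-n}$ produce $e_1,\dots,e_{n-1}$. Your displayed formula for $e_nD_{k-n}$ is exactly the paper's identity $e_nD_{k-n}=(-a)^nb^k\,q_k(x;q)$. This follows from $q_k(x;t)=\sum_i(-t)^ie_ih_{k-i}$ with $t=q=(ab)^{-1}$.

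The only difference is the last algebraic step. The paper takes the logarithm $\log\sum_r q_r(x;q)u^r=\sum_r\frac{1-q^r}{r}p_ru^r$ and concludes that $p_1,\dots,p_n$ are polynomials in $q_1,\dots,q_n$. You instead read off the coefficient of $e_k$ modulo $\C[e_1,\dots,e_{k-1}]$ using Newton's identity, obtaining $(-a)^{n-k}(1-q^{-k})$. The two nondegeneracy conditions are the same fact, since $q_k(x;q)\equiv(-1)^{k-1}(1-q^k)e_k$ modulo decomposables.

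Your version is more elementary and needs only $D_{1-n},\dots,D_{-1}$ together with $\iota(D_{-n})$. The paper's version is more conceptual: recognizing $G$ as the generating function of the Hall--Littlewood $q_r(x;q)$ makes the invertible change of generators visible in one line.
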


\begin{proof}
First, $D_{-n}=(-a)^ne_n^{-1}$ and $\iota(D_{-n})=(-a)^nq^{n^2}e_n$, so $e_n^{\pm1}\in\cA$. Next,
\[
G(Tx)=\prod_{i=1}^n\frac{-a\,T^{-1}x_i^{-1}\,(1-a^{-1}Tx_i)}{1-bTx_i}=(-a)^nT^{-n}e_n^{-1}\prod_{i=1}^n\frac{1-q^{\frac{n+1}2}Tx_i}{1-q^{\frac{n-1}2}Tx_i}.
\]
Let $q_r(x;t)$ be the Hall--Littlewood functions \cite[III, (2.9)]{Mac},
\[
\sum_{r\ge0}q_r(x;t)\,u^r=\prod_{i=1}^n\frac{1-tx_iu}{1-x_iu}.
\]
With $u=bT$ and $t=a^{-1}b^{-1}=q$ we obtain $e_n\,D_{r-n}=(-a)^nb^r\,q_r(x;q)$, so $q_r(x;q)\in\cA$ for all $r\ge0$. Taking logarithms,
\[
\log\sum_{r\ge0}q_r(x;q)u^r=\sum_{r\ge1}\frac{1-q^r}{r}\,p_r(x)\,u^r,
\]
where $p_r$ are the power sums. Since $1-q^r\ne0$ for $r\ge1$, each $p_r$ is a polynomial in $q_1,\dots,q_r$, and conversely. Hence $\cA$ contains $\C[p_1,\dots,p_n]=\C[e_1,\dots,e_n]$, and with $e_n^{-1}$ it contains $\Lambda$.
\end{proof}

\begin{proof}[Local proof of Offen's theorem]
Take $\epsilon=-1$, so $\delta_\epsilon=1$. By Lemma~\ref{lem:unit} and Propositions~\ref{prop:closure} and~\ref{prop:generation}, $V\supset\cA=\Lambda$. Since $\cS$ and $\cH$ are isomorphisms onto $\Lambda$, every pair $(\Phi,\Psi)\in\HG\times\HS$ with $\cS\Phi=\cH\Psi$ is a Hironaka pair with transform in $V$, hence $1$-matches. This proves the theorem locally in the $\epsilon=-1$ normalization, and Lemma~\ref{lem:change} gives the equivalent $\epsilon=+1$ form, with $\delta_{+1}=0$.
\end{proof}

\begin{corollary}[Jacquet]\label{cor:jacquet}
For every $f\in\cH_G$, the functions $X\mapsto\int_G\mathbf 1_{K\cap S}({}^t\bar gXg)f(g)\,dg$ and $X\mapsto\int_{G'}\mathbf 1_{K'}(Xg)\,b(f)(g)\,dg$ match in Jacquet's sense.
\end{corollary}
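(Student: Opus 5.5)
The plan is to reduce Corollary~\ref{cor:jacquet} to Offen's theorem. The main step is to show that the two displayed functions form a Hironaka pair in the $\epsilon=-1$ normalization, up to the twist $\tw$. Theorem~\ref{thm:offen} then gives $1$-matching on invertible diagonal $a$. Finally, the statement is converted into matching in Jacquet's sense, including the degenerate case $a_n=0$.

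Put $\Psi_f(X)=\int_G\mathbf 1_{K\cap S}({}^t\bar gXg)f(g)\,dg$ and $\Phi_f(X)=\int_{G'}\mathbf 1_{K'}(Xg)\,b(f)(g)\,dg$. Both are compactly supported on invertible matrices, and they lie in $\HS$ and $\HG$ respectively. On the $G'$ side, $\Phi_f$ is $\mathbf 1_{K'}$ convolved with $b(f)$, up to the inversion $g\mapsto g^{-1}$ in the convolution. Hence $\cS\Phi_f$ equals $\cS(b(f))$ or its inverse-variable version. Since $\cS(b(f))(x)=\cS_G(f)(x^2)$, we get $\cS\Phi_f=\hat f(x^{\pm 2})$, where $\hat f$ is the Satake transform of $f$ on $G$.

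On the Hermitian side, $\Psi_f$ is the action of $f$ (or of $f^\vee$) on $C_0=\mathbf 1_{K\cap S}$. By the $\cH_G$-linearity recorded after the definition of $\cH_\epsilon$, its transform is $\hat f(x^{\pm2})\cdot\cH(C_0)=\hat f(x^{\pm2})$. The main obstacle is pinning down the two inversion conventions so that the exponents agree. I would check this directly on a single generator $f=\mathbf 1_{K\vp^{\mu}K}$ with $\mu$ minuscule, comparing Hironaka's Hecke eigenvalue \cite[(2.3)]{H99} with the base-change eigenvalue. If a mismatch $x\mapsto x^{-1}$ appears, I would repair it with Lemma~\ref{lem:inversion}, applied either to the pair or to the Hecke element $f^\vee$.

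With $\cS\Phi_f=\cH\Psi_f$ established, Theorem~\ref{thm:offen} with $\epsilon=-1$ gives $\Omega'[\Phi_f;a]=\gamma_1(a)\Omega[\Psi_f;a]$ for invertible diagonal $a$. By \eqref{eq:gammas}, this reads $\Omega'[\Phi_f;a]=\gamma_J(a)\,\omega(\det a)^{n+1}\Omega[\Psi_f;a]$. It remains to see that $\omega(\det X)^{n+1}=1$ on the support of $\Psi_f$. This holds because ${}^t\bar gXg\in K$ forces $v(\det X)=-2v_E(\det g)$, which is even with respect to $\vp$ since $E/F$ is unramified. Thus $\Psi_f^\tw=\Psi_f$, and we obtain Jacquet matching on invertible $a$.

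For $a$ with $a_n=0$ and $a_1\cdots a_{n-1}\neq0$, both orbital integrals vanish, since $\Phi_f$ and $\Psi_f$ are supported on invertible matrices. This is the same argument as in the proof of Proposition~\ref{prop:fourier}, and it completes Jacquet's matching on his full domain.
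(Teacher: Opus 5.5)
Your proposal is correct and is essentially the paper's argument. You recognize $(\Phi_f,\Psi_f)$ as a Hironaka pair via base change and the $\cH_G$-linearity of $\cH$, apply Theorem~\ref{thm:offen} with $\epsilon=-1$, and use that $v(\det)$ is even on the supports so that $\gamma_1=\gamma_J$ there; you also make explicit the $a_n=0$ case, which the paper leaves implicit.

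One caution about your fallback. Repairing an $x\mapsto x^{-1}$ discrepancy via Lemma~\ref{lem:inversion} or via $f^\vee$ would not work, because both operations invert the transforms on the two sides at once; a genuine discrepancy would only let you pair $\Phi_f$ with $\Psi_{f^\vee}$. Fortunately no repair is needed: both transforms equal the Satake transform of $f$ evaluated at $x^{-2}$. For instance, for $f=\mathbf 1_{\vp K}$ one gets $\Phi_f=c_{(-2,\dots,-2)}$ and $\Psi_f=C_{(-2,\dots,-2)}$, both with transform $e_n^{-2}$.
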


\begin{proof}
As observed in \cite{O06}, both functions have the same transform, namely the Satake transform of $f$ (or of $\check f$, according to the convolution convention) evaluated at $x^2$, by the $\cH_G$-linearity of $\cH$ \cite[Theorem 2]{H99} and the definition of base change. Their supports lie in $\{v(\det)\in2\Z\}$, where $\gamma_1=\gamma_J$ by \eqref{eq:gammas}.
\end{proof}

\end{document}